\newtheorem{counter}{Counter}
\newtheorem{lem}[counter]{Lemma}
\newtheorem{defn}[counter]{Definition}
\newtheorem{thm}[counter]{Theorem}
\newtheorem{prop}[counter]{Proposition}
\newcommand{\R}{\mathbb{R}}
\renewcommand{\L}{\mathcal{L}}
\renewcommand{\lg}{\langle}
\newcommand{\rg}{\rangle}
\newcommand{\lra}{\longrightarrow}
 \newcommand{\sse}{\subseteq}
\newcommand{\pd}{\partial}
\newcommand{\rhu}{\rightharpoonup}
\newcommand{\fal}{\forall}
\newcommand{\8}{\infty}
\newcommand{\vph}{\varphi}
\newcommand{\vep}{\varepsilon} %lettere greche
\newcommand{\dt}{\delta}
 \newcommand{\ggm}{\Gamma}
\DeclareMathOperator{\pero}{{\mathbb{T}_0}}
\definecolor{mygreen}{rgb}{0.1,0.75,0.2}
\renewcommand{\d}{\,{\operatorname{d}}}
\newcommand{\I}{{\mathbb{T}}}
\newcommand{\tdV}{\widetilde{V}}
\begin{document}

\title[Maximal monotone operator in non-reflexive Banach space]{Maximal monotone operator theory and its applications to thin film equation in epitaxial growth on vicinal surface}
\author{Yuan Gao}
\address{Department of Mathematics
Hong Kong University of Science and Technology, Clear Water Bay, Kowloon, Hong Kong}
\email{gaoyuan12@fudan.edu.cn; maygao@ust.hk}
\author{Jian-Guo Liu}
\address{Department of Mathematics and Department of
  Physics\\Duke University,
  Durham NC 27708, USA}
\email{jliu@phy.duke.edu}
\author{Xin Yang Lu}
\address{Department of Mathematical Sciences
Lakehead University, Thunder Bay, ON, P7B 5E1, Canada}
\email{xlu8@lakekeadu.ca; xinyang.lu@mcgill.ca}

\author{Xiangsheng Xu}
\address{Department of Mathematics and Statistics\\Mississippi State University,
Mississippi State, MS 39762, USA}
\email{xxu@math.msstate.edu}

%\keywords{Fourth-order degenerate parabolic equation, Radon measure, global strong solution, hidden singularity, uniqueness of sub-differential }

\date{\today}

\keywords{Fourth-order degenerate parabolic equation, non-reflexive Banach space, Radon measure, global strong solution, hidden singularity, uniqueness of sub-differential }
%\subclass{35D99, 35K65, 35R06, 49J40, 47J05, 47H05}
\begin{abstract}
  In this work we consider
  \begin{equation}\label{abs}
w_t=[(w_{hh}+c_0)^{-3}]_{hh},\qquad w(0)=w^0,
\end{equation}
which is derived from a thin film equation for epitaxial growth on vicinal surface. We formulate the problem as the gradient flow of a suitably-defined convex functional in a non-reflexive space. Then by restricting it to a Hilbert space and proving the uniqueness of its sub-differential, we can apply the classical maximal monotone operator theory. The mathematical
difficulty is due to the fact that $w_{hh}$ can appear as a positive Radon measure.
 % We establish a general framework for abstract evolution equations with maximal monotone operators in non-reflexive Banach space which can be used in a wide class of degenerate parabolic equations. Following this framework, we formulate a global strong solution to \eqref{abs}, which allows $w_{hh}$ to be a non-negative Radon measure.
  We prove the existence of a global
  strong solution with hidden singularity. In particular, \eqref{abs} holds almost everywhere when $w_{hh}$ is replaced by its absolutely continuous part.
  % and obtain the almost everywhere positivity of $w_{hh}+c_0$.
\end{abstract}

\maketitle

 \section{Introduction}

 \subsection{Background and motivation.}
Below the roughening
transition temperature, the crystal surface is not smooth and forms steps, terraces and adatoms on the substrate, which form solid films. Adatoms
detach from steps, diffuse on the terraces until they meet other
steps and reattach again, which lead to a step flow on the crystal
surface. The evolution of individual steps is described mathematically
by the Burton-Cabrera-Frank (BCF) type discrete models \cite{BCF}. Although discrete models do have the advantage of reflecting physical principle directly,
when we study the evolution of crystal growth from macroscopic view, continuum approximation for the discrete models involves fewer variables than discrete models and can briefly show the evolution of step flow. Many interesting continuum models can be found in the literature on surface morphological evolution; see \cite{Zang1990,Tang1997,Yip2001,Xiang2002,Xiang2004,Shenoy2002,Margetis2011,Leoni2014,our1} for one dimensional models and \cite{Margetis2006,Xiang2009} for two dimensional models.
\textsc{Kohn} clarified the evolution of surface height from the thermodynamic viewpoint in
{the} book \cite{Kohnbook}. He considered the classical surface energy, which dates back to the pioneering work of Mullins \cite{1957} and Najafabadi, Srolovitz \cite{SSR1}, {   given by}
\begin{equation}
  F(h):=\int_{\Omega} {  (}\beta_1|\nabla h|+\beta_3|\nabla h|^3 {  )}\d x,
\end{equation}
where $\Omega$ is the ``step locations area'' we {are} concerned {with}.
Then, by conservation of mass, we have the equation for surface height $h$
\begin{equation}\label{heighteq}
\begin{aligned}
  h_t&=\nabla \Big(M(\nabla h) \nabla \frac{\delta F}{\delta h}\Big)\\
  &=-\nabla\bigg( M(\nabla h) \nabla\Big( \nabla\cdot \Big( \beta_1 \frac{\nabla h}{|\nabla h|}+\beta_3|\nabla h|\nabla h\Big)\Big)\bigg),
  \end{aligned}
\end{equation}
where $M(\nabla h)$ is a suitable ``mobility'' {term}
 depending on the dominating process of surface motion. Often two limit cases are considered. For diffusion-limited (DL) case, the dominated dynamics is diffusion across the terraces,
 {we have} $M(\nabla h)=1$; while for attachment-detachment-limited (ADL) case, the dominating
processes are the attachment and detachment of atoms at steps edges, {and} $M(\nabla h)=\frac{1}{|\nabla h|}$.
In the DL regime,   \cite{giga} obtained a fully understanding of the evolution and proved
the finite-time flattening. However, in the ADL regime, due to the difficulty brought by mobility
{   term} $M(\nabla h)=\frac{1}{|\nabla h|}$, the dynamics of the solution to surface height equation \eqref{heighteq}, with either $\beta_1=0$ or $\beta_1\neq 0$, is still an open question {  (see for instance \cite{Kohnbook})}.

Although a general surface may have peaks and valleys, the analysis of step motion on the level
of continuous PDE is complicated and we focus on a simpler situation first:
a monotone one-dimensional step train. In this simpler case, $\beta_1=0$, and by taking $\beta_3=\frac{1}{2}$, \eqref{heighteq} becomes
\begin{equation}
  h_t=-\Big[\frac{1}{h_x}\big(3h_xh_{xx}\big)_x\Big]_x.
\end{equation}
\textsc{Ozdemir, Zangwill} \cite{Zang1990} and \textsc{Al Hajj Shehadeh, Kohn and Weare} \cite{She2011} realized that using the step slope as a new variable is a convenient way to study the continuum PDE model, i.e.,
\begin{equation}\label{ueq}
  u_t=-u^2(u^3)_{hhhh}, \quad u(0)=u_0,
\end{equation}
where $u$, considered as a $[0,1)$-periodic function of the step height $h$, is the step
  slope of the surface.
  \cite{our1} provided a method to rigorously obtain the convergence rate of discrete model to its corresponding continuum limit.

Two questions then arise. One is how to formulate a proper solution to \eqref{ueq} and prove the well-posedness of its solution. The other one is the positivity of the solution. More explicitly, we want to know whether the sign of the solution $u$ to \eqref{ueq} is persistent.  Our goal in this work is to validate the continuum slope PDE \eqref{ueq} by answering the above two questions. The equation \eqref{ueq} is a degenerate equation and we cannot prevent $u$
from touching zero, where singularity arise. We observe that we are able to rewrite \eqref{ueq} as an abstract evolution equation with maximal monotone operator using $\frac{1}{u}$. However, the main difficulty is that we have to work in a non-reflexive Banach space $L^1$, which does not possess weak compactness, so the classical theorem for maximal monotone operators in reflexive Banach space cannot be applied directly. In fact, {due to the loss of weak compactness} it is natural to allow a Radon measure being our solution $\frac{1}{u}$ and we do observe the singularity when $u$ approaches zero in numerical simulations \cite{hangjie}. Also see \cite{Xu2} for an example where a measure appears in the case of an exponential nonlinearity.
Therefore, we devote ourselves to the establishment of a general abstract framework for problems associated with nonlinear monotone operators in non-reflexive Banach spaces and to solve our problem \eqref{ueq} by the abstract framework.
Furthermore, the established abstract framework can be applied to a wide class of degenerate parabolic equations which can be recast as an abstract evolution equation with maximal monotone operator in some non-reflexive Banach space, for instance, to the degenerate exponential model studied in \cite{Xu2}.
%We refer \cite{Gradient} for deep study of monotone operators in metric space and the existence of curves of maximal slope. However, we do not have $\lambda$-convex functionals in our problem.
The abstract framework is discussed precisely below.

\subsection{Formal observations and abstract setup}\label{sec1.2}
Denote by $ \varphi(h,t)$ as the step location when considered as a function of surface height $h$. Formally, we have
  $$u(h,t)=h_x( \varphi(h,t),t)=\frac{1}{\varphi_h(h,t)},$$
  and the $u$-equation \eqref{ueq} can be rewritten as $ \varphi$-equation
\begin{equation}\label{eq:phin}
     \varphi_{ t}=\Big(\frac{1}{ \varphi_h^3}\Big)_{hhh};
\end{equation}
{   for further details we refer to} the appendix of \cite{our2}.

Motivated by the $\varphi$-equation, we want to recast \eqref{ueq} as an abstract evolution equation.
 If $u$ has a positive lower-bound $u\geq \alpha>0,$ then \eqref{ueq} can be rewritten as
\begin{equation}\label{nueq}
\Big(\frac{1}{u}\Big)_t=(u^3)_{hhhh}, \quad u(0)=u_0.
\end{equation}
Formally, if we take $w_{hh}=\frac{1}{u}$, then we have
\begin{equation}\label{tmweq}
w_t=(w_{hh}^{-3})_{hh}.
\end{equation}
Since our problem \eqref{nueq} is in $1$-periodic setting, i.e., one period $[0,1)$, we also want $w$ to be periodic. {  Denote by $\mathbb{T}$ the $[0,1)$-torus. For measure space, we can define periodic distributions as distributions on $\mathbb{T}$, i.e., bounded linear functionals on $C^\8(\I)$. } Let the $\I$-periodic function $w$ be the solution of the Laplace equation
$$w_{hh}=\frac{1}{u}-c_0, \quad \int_\I w \d h=0,$$
with compatibility condition
$$\int_\I w_{hh} \d h =\int_\I \frac{1}{u}-c_0 \d h =0.$$
If  \eqref{nueq} holds a.e., then we have
$$\int_\I \frac{1}{u} \d h \equiv \int_\I \frac{1}{u_0} \d h =:c_0>0$$
due to the periodicity of $u$. However, we cannot show that \eqref{nueq} holds almost everywhere.
Actually, the possible existence of singular part for $w_{hh}$ or $\frac{1}{u}$ is intrinsic,
 since the equation \eqref{ueq} becomes degenerated when $u$ approaches zero. We cannot prevent $u$
 from touching zero, and can only show $w_{hh}=\frac{1}{u}-c_0\in \mathcal {M}(\I)$, where $\mathcal{M}$ is the set of finitely additive, finite, signed Radon measures. Hence the compatibility condition becomes
$$\int_\I\d \big(\frac{1}{u}-c_0\big)=0,$$
where $c_0$ is a positive constant. Moreover, we can illustrate the singularity in the following stationary solution.
Define a $\I$-periodic function $w(h)$ such that
 $$w(h)=\left\{\begin{array}{ll}
& -(h+\frac{1}{2})^2+\frac{1}{12}\quad \text{ for }h\in[-\frac{1}{2},0);\\
 &-(h-\frac{1}{2})^2+\frac{1}{12}\quad \text{ for }h\in[0,\frac{1}{2}).
 \end{array}\right.
 $$
 Then $w_{hh}=-2+2\delta_0$ where $\delta_0$ is the Dirac function at zero and $w$ is the stationary solution to \eqref{tmweq}.
It partially explains why we can not exclude the singular part for $w_{hh}$ or $\frac{1}{u}$.

Therefore,
in this paper we consider the parabolic evolution equation
\begin{equation}
w_t=[(w_{hh}+c_0)^{-3}]_{hh},\qquad w(0)=w^0,
\label{maineq}
\end{equation}
under the assumption $w$ is periodic with period $\I$ and has mean value zero in one period, i.e., $\int_\I w \d h =0$.

For $1\leq p < \8,\,k\in \mathbb{Z}$, {   set}
\begin{align*}
W^{k,p}_{\pero }(\I)&:=\{u\in W^{k,p}(\I); u(h)=u(h+1), \text{a.e. and $u$ has mean value zero in one period}\},
\\
L^{p}_{\pero }(\I)&:=\{u\in L^{p}(\I); u(h)=u(h+1), \text{a.e. and $u$ has mean value zero in one period}\}.
\end{align*}
Standard notations for Sobolev spaces are assumed above.
If $k<0$ and  $1\le p< +\8,\, 1<q\leq +\8$, $(1/p) + (1/q) = 1$, then it can be shown that $W^{k,q}_{\pero }(\I)$ is the dual of $W^{-k,p}_{\pero }(\I).$

Our main functional spaces will be
\begin{equation}
\label{space V}
V:=\{v\in W_{\pero }^{2,1}(\I)\},
\end{equation}
and
\begin{equation}
  \tdV:=\{u\in W^{1,2}_{\pero }(\I); u_{hh}\in \mathcal{M}(\I) \}.
\end{equation}
Define also
\begin{equation}
\label{space U}
U:=\{v\in L^2_{\pero }(\I)\}.
\end{equation}
Endow $U$ and $V$ with the norms $\|u\|_{U}:=\|u\|_{L^2(\I)}$ and $\|v\|_{V}:=\|v_{hh}\|_{L^1(\I)}$ respectively.
Note that the zero-mean conditions for functions of $V$ give the equivalence between
$\|\cdot\|_V$ and $\|\cdot\|_{W^{2,1}(\I)}$. Note also that
the embeddings $V\hookrightarrow U\hookrightarrow V'$ are all dense and continuous.

\bigskip
\noindent{\bf{The space $\tdV$.}}

Note also that any $\I$-periodic function $w$ who has mean value zero
such that $w_{hh}$ is a  finite Radon measure will
belong to $W^{1,2}_{\pero }(\I )$, since the first derivative $w_h$ is a BV function (the total variation
of $w_h$ is exactly the total mass of $w_{hh}$).
Thus we can endow the space $\tdV$ with the norm
\begin{equation*}
\|w\|_{W^{1,2}(\I )}+\|w_{hh}\|_{\mathcal{M}(\I)}.
\end{equation*}
Since $w$ is $1$-periodic and has mean value zero, we have
\begin{equation*}
\|w_h\|_{L^2(\I )} \le \|w_{hh}\|_{\mathcal{M}(\I)},\qquad
\|w\|_{L^2(\I )} \le \|w_{h}\|_{L^2(\I )}.
\end{equation*}
So we can use the equivalent norm
\begin{equation*}
\|w\|_{\tdV}:=\|w_{hh}\|_{\mathcal{M}(\I)} = \sup_{f\in C (\I),\ |f|\le 1 } \int_\I f\d w_{hh}.
\end{equation*}

The weak{  -*} convergence on $\tdV$ is then characterized as: a sequence $w^n$ converges weakly-* to $w$ in $\tdV$ if
$w^n$ converges weakly to $w$ in $W^{1,2}(\I )$, and
$w^n_{hh}$ converges weakly{  -*} to $w_{hh}$ in $\mathcal{M}(\I)$, i.e.
\begin{equation*}
\int_\I f \d w^n_{hh} \to \int_\I f \d w_{hh} \qquad \text{for any } f\in C (\I ) .
\end{equation*}

\bigskip
\noindent{\bf{Relations between $V$ and $\tdV$.} }

Since $V$ is not reflexive, we first present a characterization of the bidual space $V''$. For any $v\in V$,
we have $v_{hh}\in L^1_{\pero }(\I)$. Since also $C (\I)\sse L^\8(\I)$, we have:
\begin{enumerate}[(i)]
\item the dual space $V'=\{u\in (W^{2,1}_{\pero }(\I))'\}=W^{-2,\8}_{\pero }(\I)$ ;
\item for any $\xi\in V',$ $\eta\in V$, from the Riesz representation, there exists $\bar{\xi}\in L^\8(\I)$ such that
$$\langle \xi, \eta \rangle_{V',V}:=\int_\I \bar{\xi}\eta_{hh} \d h,$$
and we denote $\bar{\xi}_{hh}$ as $\xi$ without risk of confusion;
\item the bidual space $V''$ is a subspace of $\tdV$. Indeed, since $C (\I)\sse L^\8(\I)$, for any $u\in V''$ and any $g\in C(\I)$, we have 
$$|\lg u_{hh}, g\rg|=|\lg u, g_{hh}\rg_{(V'',V')}|\leq |u|_{V''}|g_{hh}|_{V'}\leq |u|_{V''}|g|_{C(\mathbb{T})}<+\8 ,$$
where we have used the identity
$$\langle g_{hh}, \eta \rangle_{(V', V)}=\int_{\mathbb{T}} g \eta_{hh} \mbox{d} h, \quad \forall \eta \in V$$
to conclude $|g_{hh}|_{V'}\leq |g|_{C(\mathbb{T})}.$
 Thus we know $u_{hh}$ define a bounded linear functional on $C(\I)$ so $u\in \tdV.$
\end{enumerate}
Thus
\begin{equation*}
V\sse V''\sse \tdV \sse W^{1,2}_{\pero }(\I)\sse U.
\end{equation*}
Therefore, we conclude that
the canonical embedding $V\hookrightarrow V''\hookrightarrow \tdV \hookrightarrow W^{1,2}_{\pero }(\I) \hookrightarrow U$ is continuous and each one is a dense subset of the next, since $V$ is dense in $U$.

\bigskip
\noindent{\bf{Observation 1.}}

 From \eqref{maineq}, one formal observation is that if we set
\begin{equation}
  \phi(w):=\frac{1}{2}\int_\I (w_{hh}+c_0)^{-2} \d h,
\end{equation}
then
$$w_t=-\frac{\delta \phi}{\delta w}=[(w_{hh}+c_0)^{-3}]_{hh} $$
forms a gradient flow of $\phi$ with the first variation $\frac{\delta \phi}{\delta w}$; see exact definition in \eqref{defphi} and calculations in Theorem \ref{mainth1}.
Hence we have
\begin{equation}
  \frac{\d \phi}{\d t}=\int_{\I} \frac{\delta \phi}{\delta w}w_t \d h=\int_{\I} -w_t^2 \d h =-\int_{\I} [(w_{hh}+c_0)^{-3}]_{hh}^2 \d h \leq 0.
\end{equation}
Besides, we also notice that  $\phi(w)=\frac{1}{2}\int_\I  (w_{hh}+c_0)^{-2} \d h$ is a convex functional. Recall that the sub-differential of a proper, convex, lower-semicontinuous function is a maximal monotone operator (see for instance \cite{Barbu2010}), which gives us the idea of using maximal monotone operator to formally rewrite our problem \eqref{maineq}, i.e.,
\begin{equation}
  w_t=-\partial \phi(w).
\end{equation}

\bigskip
\noindent{\bf{Observation 2.}}

Set also
\begin{equation}
  E(w):=\frac{1}{2}\int_{\I} [(w_{hh}+c_0)^{-3}]_{hh}^2 \d h =\frac{1}{2}\int_{\I} w_t^2 \d h;
\end{equation}
see exact definition in Definition \ref{strongslu}.
Taking {the derivative} $\partial_{hh}$ on the both side of \eqref{maineq}, we have
\begin{equation}\label{whh}
  [w_{hh}+c_0]_t=[(w_{hh}+c_0)^{-3}]_{hhhh}.
\end{equation}
 Then another formal observation is that
\begin{align*}
  \frac{\d E(w)}{\d t}&=\int_\I [(w_{hh}+c_0)^{-3}]_{hh}[(w_{hh}+c_0)^{-3}]_{hht} \d h\\
  &=\int_\I [(w_{hh}+c_0)^{-3}]_{hhhh}[(w_{hh}+c_0)^{-3}]_{t} \d h=\int_\I [w_{hh}+c_0]_{t}[(w_{hh}+c_0)^{-3}]_{t} \d h\\
  &=\int_\I -3 \frac{[(w_{hh}+c_0)_t]^2}{(w_{hh}+c_0)^4}\d h \leq 0.
\end{align*}
We point out the dissipation of $E(w)$ is important for the proof of existence result.

\bigskip
\noindent{\bf{Observation 3.}}

Moreover, to ensure the surjectivity of the maximal monotone operator $ \pd\phi$, we need to find a proper invariant ball. Another formal observation from \eqref{whh} is that
\begin{equation*}
  \frac{\d}{\d t} \int_\I (w_{hh}+c_0) \d h=\int_\I [(w_{hh}+c_0)^{-3}]_{hhhh} \d h =0.
\end{equation*}
So for a constant $C$ depending only on the initial value $w_0$, $\{\|w\|_{V}\leq C\}$ could be an invariant ball provided $w_{hh}+c_0>0$ almost everywhere. But note that $V$ is not a reflexive space and that bounded sets in $L^1{  (\I)}$
do not have any compactness property. Actually we only obtain
$$w_{hh}+c_0>0, \text{ a.e. }(t,h)\in[0,T]\times \I,$$
 $$\int_\I \d (w_{hh}+c_0) \leq C, \text{ for any }t\geq 0,$$ and
 choose $\{\|w\|_{\tdV }\leq C\}$ to be the invariant ball. That is consistent with the prediction that $w_{hh}=\frac{1}{u}-c_0$ is possible to be a Randon measure.

After those formal observations,
in order to rewrite our problem as an abstract problem precisely, we introduce the following definition.
\begin{defn}\label{defpp}
For any  $w\in \tdV ,$ from \cite[p.42]{evans1992}, we have the decomposition
\begin{equation}\label{decom}
  w_{hh}=\eta +\nu
\end{equation}
with respect to the Lebesgue measure,
where $\eta \in L^1(\I)$ is the absolutely continuous part of $w_{hh}$ and $\nu$ is the singular part, i.e., the support of $\nu$ has Lebesgue measure zero. Recall $c_0$ is a constant in \eqref{maineq}.
Denote $ g :=\eta +c_0$. Then $w_{hh}+c_0= g +\nu$ and $ g \in L^1(\I)$ is the absolutely continuous part of $w_{hh}+c_0.$

Define the proper, convex functional
\begin{equation}\label{defphi}
\phi:\tdV \lra \R\cup\{+\8\},\quad \phi(w):=
{
\begin{cases}
\int_{0}^1 \Phi( g ) \d h&{ \text{if } w_{hh}+c_0\in \mathcal{M}^+(\I),}\\
+\8&\text{otherwise},
\end{cases}}
\quad
\Phi(x):=\begin{cases}
+\8 & \text{if }x\le 0,\\
x^{-2}/2&\text{if }x> 0,
\end{cases}
\end{equation}
where $ g \in L^1(\I)$ is the absolutely continuous part of $w_{hh}+c_0.$
For some constant $C>0$ large enough, define the proper, convex functional
\begin{equation}\label{defpsi}
\psi:\tdV \lra \{0,+\8\},\qquad \psi(w):=\begin{cases}
0&\text{if } \|w\|_{ \tdV }\le C,\\
+\8&\text{if } \|w\|_{ \tdV }> C.
\end{cases}
\end{equation}
\end{defn}
The domain of $\phi+\psi$ is
$$D(\phi+\psi):=\{w\in\tdV;\, (\phi+\psi)(w)<+\8\}\sse \tdV \cap \{\|w\|_{\tdV }\le C\}.$$
Note that $\mathcal{K}:=\{w\in \tdV :\|w\|_{ \tdV }\le C\}$ is closed and convex, hence
its indicator (i.e., $\psi$) is convex, lower-semicontinuous and proper. Later, we will determine the constant $C$ by initial data $w_0$ and show $\psi$ is just an auxiliary functional.

Now we can state two definitions of solutions we study in this work.
\begin{defn}\label{weakslu}
 Given $\phi,\,\psi$ defined in Definition \ref{defpp}, for any $T>0,$ we call the function $$w\in L^\8([0,T];\tdV )\cap C^0([0,T];U),\quad w_t\in L^\8([0,T];U)$$
   a variational inequality solution to \eqref{maineq} if it satisfies
  \begin{equation}\label{vi}
 \lg w_t,v-w\rg_{{ U',U}} +(\phi+\psi)(v)-(\phi+\psi)(w)\ge 0
 \end{equation}
  for a.e. $t\in[0,T]$ and all $v\in { \tdV }$.
\end{defn}

\begin{defn}\label{strongslu}
  For any $T>0,$ let $\eta \in L^1(\I)$ be the absolutely continuous part of $w_{hh}$ in \eqref{decom}. Define
         \begin{equation}
           E(w):=\frac{1}{2}\int_\I \big[((\eta +c_0)^{-3})_{hh}\big]^2 \d h.
         \end{equation}
        We call the function $$w\in L^\8([0,T];\tdV )\cap C^0([0,T];U),\quad w_t\in L^\8([0,T];U)$$
   a strong solution to \eqref{maineq} if
   \begin{enumerate}[(i)]
  \item it satisfies
  \begin{equation}\label{slu}
  w_t=[(\eta +c_0)^{-3}]_{hh}
 \end{equation}
  for a.e. $(t,h)\in[0,T]\times \I$;
  \item we have $((\eta +c_0)^{-3})_{hh}\in L^\8([0,T];U)$ and the dissipation inequality
         \begin{equation}
           E(w(t))=\frac{1}{2}\int_\I \big[((\eta(t) +c_0)^{-3})_{hh}\big]^2 \d h\leq E(w(0)).
         \end{equation}
  \end{enumerate}
\end{defn}
The main result in this work is to prove
existence of the variational inequality solution and strong solution to \eqref{maineq}, which is stated in Theorem \ref{mainth} and Theorem \ref{mainth1} separately.

\subsection{Overview of our method and related method}
The key of our method is to rewrite the original problem as an abstract evolution equation $w_t=-\tilde{B}w$, where $\tilde{B}$
 is
the sub-differential of a proper, convex, lower semi-continuous function, i.e. $\tilde{B}=\pd (\phi+\psi)$. $\tilde{B}$ is a maximal monotone operator by classical results (see for instance \cite{Barbu2010}). $\psi$ is the indicator of the invariant ball $\mathcal{K}$ in \eqref{defpsi}. By constructing the proper invariant ball $\mathcal{K}$, we also obtain the restriction of $\tilde{B}$ to $L^2(\I)$ is also a maximal monotone operator; see Lemma \ref{dc}. Notice the {   definition of the} functional $\phi$ {   involves}
only the absolutely continuous part of $w_{hh}$, so we need to prove that it is still lower semi-continuous on $\tdV$; see details in Proposition \ref{newlsc}.  Then by standard theorem for m-accretive operator (see Definition \ref{set}) in \cite{Barbu2010}, we can prove the variational inequality solution to \eqref{maineq} in Theorem \ref{mainth}. Another key point is to prove the multi-valued operator $\pd (\phi+\psi)$ is actually single valued, which concludes
that the variational inequality solution is also the strong solution defined in Definition \ref{strongslu}. However, it is not easy to directly prove $\pd (\phi+\psi)$ is single valued, so we use Minty's trick to test the variational inequality \eqref{vi} with $v=w\pm \vep \vph$. After taking limit $\vep\to 0$, we can see $w_t+\pd\phi(w)$ is a zero function for a.e. $(t,x)\in[0,T]\times\I$; see details in Theorem \ref{mainth1}.

Actually, our definitions for variational inequality solution and strong solution in Definition{s} \ref{weakslu}
and \ref{strongslu} hide a Radon measure in it. As we said before, this kind of fourth order degenerate equation has the intrinsic property of singular measure. We want to mention that \cite{Leoni2015} also used maximal monotone operator method for diffusion limited (DL) case.
 However, {since the mobility for DL model is $M=1$ instead of $\frac{1}{h_x}$, DL model can be recast as an abstract evolution equation with maximal monotone operator using the anti-derivative of $h$.} The coercivity of the this maximal monotone operator in DL case is natural and hence the operated space is a reflexive Banach space. It is much easier than our case and singular part will not appear.

 Recently, \cite{our2,Xu1} also analyzed the positivity and the weak solution to the same equation \eqref{ueq} separately. They all considered this nonlinear fourth order parabolic equation, which comes from the same step flow model on vicinal surface. The aim is to answer the two questions in Section 1.1, which also are stated as open questions in \cite{Kohnbook}. The nonlinear structure of this equation, the key for both previous and current works, is important for the positivity of solution because it is known that
 the sign changing is a general property for solutions to linear fourth order parabolic equations. For one dimensional case, following the regularized method in \cite{Friedman1990}, \cite{our2} defined the weak solution on a subset, which has full measure, of $[0,1]$ and proved positivity and existence. Using the method of approximating solutions, based on
 the implicit time-discretization scheme and carefully chosen regularization, \cite{Xu1} expanded the result in \cite{our2} to higher dimensional case. Our results are consistent with theirs, but we use a totally different  approach.
The method adopted in  \cite{Xu1} is delicate and subtle while our method seems to be more general. Furthermore, we obtain the variational inequality solution to \eqref{maineq}. We also refer to \cite{Gradient} for deep study of gradient flow in metric space, in which the results can be stated in any Banach space including non-reflexive space since the purely metric formulation does not require any vector differentiability property. However they have almost no regularity result beyond Lipschitz regularity in space.

 We point out that our method establishes a general framework for this kind of equation whose invariant ball exists in a non-reflexive Banach space. We believe this method can be applied to many similar degenerated problems as long as they can be reduced to an abstract evolution equation with maximal monotone operator which  is unfortunately in a non-reflexive Banach space.

The rest of this work is devoted
to first recall some useful definitions in Section \ref{sec2}. Then in Section \ref{sec3}, we rigorously study the sub-differential $\pd(\phi+\psi)$ and prove it is m-accretive on $U$, which leads to the existence result for variational inequality solution. In Section \ref{sec4}, we calculate the exact value of $\pd(\phi+\psi)$ and prove the variational inequality solution is
 actually a strong solution.

\subsection{Preliminaries}\label{sec2}
In this section, we first recall the following classical definitions (see for instance \cite{Barbu2010}).
\begin{defn}\label{def4}
Given a Banach space $X$ with the duality pairing
$\langle,\rangle_{X',X}$, an element $x\in X$, a functional $f:X\lra \R\cup\{+\8\}$, the sub-differential
of $f$ at $x$ is the set defined as
\begin{equation*}
\pd f(x):=\{x'\in X':f(y)-f(x)\ge \langle x',y-x\rangle_{X',X}  \ \fal y\in X\}.
\end{equation*}
We denote the domain of $\pd f$ as usual by $D(\pd f)$, i.e. the set of all $x\in X$ such that $\pd f(x)\neq \emptyset.$
\end{defn}
%Note that the definition of sub-differential requires a duality pairing between elements of $X'$ and $X$: this
%is not a problem if we have an Hilbert space $Y$ (which we identify with its dual $Y'$), such that the
%embeddings $X\hookrightarrow Y=Y'\hookrightarrow X'$
%are all continuous (which is our case).

\begin{defn}\label{maccr}
Given a Banach space $X$ with the duality pairing
$\langle,\rangle_{X',X}$, denote the elements of $X\times X'$ as $[x, y]$ where $x\in X, y\in X'$. A multivalued operator $A:X\lra X'$ identified with
its graph
$\ggm_A:=\{[x,y]\in X\times X';\,  y\in Ax \}\sse X\times X'$ is:
\begin{enumerate}
\item {\bf monotone} if for any pair $[u,u']$, $[v,v']\in \ggm_A$,
it holds
$$\lg u'-v',u-v\rg_{X',X} \geq 0;$$

\item {\bf maximal monotone} if the graph
$\ggm_A$ is not a proper subset of any monotone set.

%\item {\bf coercive} if
%\begin{equation*}
%  \lim_{\|w\|_{X}\rightarrow+\8} \frac{{\lg} y,w-w^0 {\rg_{X',X}}}{\|w\|_{X}}=+\8,
%\end{equation*}
%for some $w^0\in X$ and all $(w,y)\in \ggm_A.$
%\item {\bf hemi-continuous} if for any $u,v,w\in X$ the mapping
%$$ t\longmapsto \lg A(u+tv) ,w\rg_{X',X} $$
% is continuous.
\end{enumerate}

\begin{defn} \label{set}
Given a Hilbert space $X$, a multivalued
 operator $ B : X\lra X$ with graph $\ggm_{ B }:=\{[x,y]\in X\times X;\,  y\in Bx \}\sse X\times X$, denote $J_X:X\to X'$
as the canonical isomorphism of $X$ to $X'$. $B$ is
\begin{enumerate}
\item {\bf accretive} if for any pair $[u, u']$, $[v,v']\in \ggm_B$, there exists an element
$z\in J_X(u-v)$ such that $\lg z, u'-v'\rg_{X',X}\geq 0$;

\item {\bf m-accretive} if it is accretive and $R(I+ B )=X$, where $R(I+ B )$ denotes the range of $(I+B)$;

%\item {\bf demi-closed} if for any sequence $(x_n)\sse X$, such that
%$x_n\to x$ strongly in $X$, and $ B x_n\rhu \xi\in X$, it holds
%$(x,\xi)\in \ggm_{ B }(X)$.
\end{enumerate}
\end{defn}
\end{defn}

 {\em{Remark:}} For general Banach space, $J_X$ is the duality mapping of $X$; see details in \cite[Section 1.1]{Barbu2010}. In our case, $X=U$, so $J_X$ is the identity
operator $I$ in $U$.

\bigskip

\section{Existence result for variational inequality solution}\label{sec3}

This section is devoted to obtain a variational inequality solution to \eqref{maineq}. {By restricting the operator in the  non-reflexive Banach space $\tdV$ to $U$,} we want to apply the classical result for m-accretive operator in $U$. However, since we do not have weak compactness for sequences in $V$, and a Radon measure
{may appear} when taking the limit, we need to first prove {weak-*} lower semi-continuity for functional $\phi$ in $\tdV$.

\subsection{{  Weak-*} lower semi-continuity for functional $\phi$ in $\tdV$.}

{
Since for any $w\in\tdV$, $\phi$ defined only on its absolutely continuous part, we need the following proposition to guarantee $\phi$ is lower-semi-continuous with respect to the weakly-* convergence in $\tdV$.
\begin{prop}\label{newlsc}
  The function $\phi$ defined in Definition \ref{defpp} is lower semi-continuous with respect to the weakly-$*$ convergence in $\tdV$, i.e.,
  if $w_n {\overset*\rhu} w$ in $\tdV$, we have
  $$\liminf_{n\to +\8}\phi(w_n)\geq \phi(w).$$
\end{prop}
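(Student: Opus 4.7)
The plan is to exhibit $\phi$ as a supremum of weak-$*$ continuous affine functionals on $\tdV$, from which lower semi-continuity is automatic. This is the convex-duality route underlying the Reshetnyak / Goffman--Serrin lower semi-continuity theorem for convex integral functionals of Radon measures.

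First I would compute the Legendre--Fenchel conjugate of $\Phi$. Setting $\pd_x(xy-x^{-2}/2)=0$ gives the optimizer $x=(-y)^{-1/3}$ for $y<0$, and a direct substitution yields
\begin{equation*}
\Phi^*(y)=\sup_{x\in\R}\bigl(xy-\Phi(x)\bigr)=\begin{cases}-\tfrac{3}{2}(-y)^{2/3}&\text{if }y\le 0,\\ +\8&\text{if }y>0,\end{cases}\qquad \Phi(x)=\sup_{y\le 0}\bigl(xy-\Phi^*(y)\bigr).
\end{equation*}
The central claim is the dual representation
\begin{equation*}
\phi(w)=\sup\Bigl\{\int_\I y\,\d(w_{hh}+c_0)-\int_\I\Phi^*(y)\,\d h\,:\,y\in C(\I),\ y\le 0\Bigr\}.
\end{equation*}
For each admissible $y$ the functional $w\mapsto\int_\I y\,\d(w_{hh}+c_0)-\int_\I\Phi^*(y)\,\d h$ is weak-$*$ continuous on $\tdV$ by the very definition of weak-$*$ convergence there, so the right-hand side is a supremum of weak-$*$ continuous maps and is therefore weak-$*$ lower semi-continuous; the proposition follows.

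The easy direction $\phi\ge\sup$ runs as follows. When $w_{hh}+c_0\in\mathcal{M}^+(\I)$, decompose $w_{hh}+c_0=g+\nu$ with $\nu\ge 0$ singular. For any $y\in C(\I)$ with $y\le 0$ we have $\int_\I y\,\d(w_{hh}+c_0)=\int_\I yg\,\d h+\int_\I y\,\d\nu\le\int_\I yg\,\d h$, and Fenchel's inequality then gives $\int_\I yg\,\d h-\int_\I\Phi^*(y)\,\d h\le\int_\I\Phi(g)\,\d h=\phi(w)$. When $w_{hh}+c_0\notin\mathcal{M}^+$, $\phi(w)=+\8$ and the inequality is vacuous.

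The reverse inequality is the main obstacle. In the regime $\phi(w)<+\8$, the pointwise optimizer in Fenchel is $y^{*}=-g^{-3}$, which is only measurable and possibly unbounded. I would proceed in three steps: (i) truncate to the bounded measurable function $y^{\vep}:=\max(-\vep^{-1},-(g+\vep)^{-3})$, whose associated duality gap tends to zero as $\vep\to 0$; (ii) approximate $y^{\vep}$ by a continuous function using Lusin's theorem, controlling $\int\Phi^*(\cdot)\,\d h$ through the uniform bound on $y^{\vep}$; (iii) multiply by a continuous cutoff vanishing on an open neighbourhood of $\supp(\nu)$ of arbitrarily small Lebesgue measure---which exists since $\nu$ is Lebesgue-singular---so as to annihilate $\int y\,\d\nu$ while perturbing the absolutely continuous contribution only by $o(1)$. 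Sending $\vep\to 0$ then recovers $\phi(w)$. The divergent sub-cases of $\phi(w)=+\8$ (a nontrivial negative singular part of $w_{hh}+c_0$, $g<0$ on a positive-measure set, or $\int g^{-2}\,\d h=+\8$) are handled analogously by selecting continuous $y\le 0$ concentrated near the offending region to drive the supremum to $+\8$.
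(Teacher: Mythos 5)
Your route is genuinely different from the paper's. The paper truncates the densities from \emph{above} at a level $M$, exploits that $\Phi$ is decreasing and bounded by $1/(2M^2)$ on $[M,+\8)$ so the truncation error is uniform in $n$, and then invokes $L^\8$ weak-$*$ compactness to force the limit of the truncated sequence to remain absolutely continuous and to lie below the absolutely continuous part of the limit measure; lower semi-continuity for bounded densities plus monotonicity of $\Phi$ finishes the argument. You instead write $\phi$ as a supremum of weak-$*$ continuous affine functionals via the Fenchel conjugate $\Phi^*(y)=-\tfrac32(-y)^{2/3}$ on $y\le 0$ (your computation of $\Phi^*$ and the Fenchel--Young step in the easy direction are correct), which is the Goffman--Serrin/Reshetnyak mechanism. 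The structural fact both proofs exploit is the same one in two languages: the paper uses ``$\Phi$ decreasing and vanishing at $+\8$,'' you use ``$\Phi^*$ finite exactly on $y\le0$, i.e.\ recession slope $0$ in the direction $+1$,'' which is precisely why the nonnegative singular part $\nu$ is harmless. Your approach is more systematic and would generalize to other integrands; the paper's is more elementary and avoids the duality identity, whose hard direction is where all your remaining work sits.

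There is one concrete error in that hard direction: in step (iii) you cut off on ``an open neighbourhood of $\supp(\nu)$ of arbitrarily small Lebesgue measure,'' but such a neighbourhood need not exist --- the topological support of a Lebesgue-singular measure can be all of $\I$ (e.g.\ $\nu=\sum_k 2^{-k}\delta_{q_k}$ with $\{q_k\}$ dense). The correct object is not $\supp(\nu)$ but a carrier: since $\nu$ is concentrated on a Borel set of Lebesgue measure zero, inner regularity gives a compact $K$ with $|K|=0$ and $\nu(\I\setminus K)<\epsilon$, and you then take the cutoff to vanish on $K$ and equal $1$ outside an open $U\supseteq K$ with $|U|<\epsilon$; this makes $\int_\I y\,\d\nu\ge -\|y\|_{L^\8}\,\epsilon$ while perturbing the absolutely continuous and $\Phi^*$ terms by $o(1)$. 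With that repair the scheme goes through, but note that the ``handled analogously'' cases where $\phi(w)=+\8$ are not cosmetic: e.g.\ when $g=0$ on a set of positive measure you must place $y=-n$ on a compact subset of $\{g=0\}$ chosen \emph{disjoint from the carrier of $\nu$} and shrink its neighbourhood so that both $n\int_{U\setminus K}g$ and $n\,\nu(U)$ stay bounded while $\tfrac32 n^{2/3}|K|\to+\8$; and when $w_{hh}+c_0$ has a negative singular part you need the Hahn decomposition plus regularity to keep the $\nu^+$ and $g$ contributions from swamping the divergent $\nu^-$ term. These are all standard but each requires an explicit estimate before the proof is complete.
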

For any $\mu\in \mathcal{M}(\I)$, we denote $\mu\ll\L^1$ if $\mu$ is absolutely continuous with respect to Lebesgue measure and denote $\bar{\mu}:=\frac{\d\mu}{\d\L^1}$ as the density of $\mu$. For notational simplification, denote $\mu_{\|}$ (resp. $\mu_{\bot}$) as the absolutely continuous part (resp. singular part) of $\mu$ with respect to Lebesgue measure.
Before proving Proposition \ref{newlsc}, we first state some lemmas. {  The following Lemma comes from the weak-* compactness of $L^\8$ directly so we omit the proof here.
\begin{lem}\label{lemdunford}
  For any $N\geq0$, given a sequence of measures $\mu_n$ in $\mathcal{M}(\I)$ such that  $\mu_n\ll\L^1$ for any $n$, and the densities $\bar{\mu}_n:=\frac{\d\mu_n}{\d\L^1}$ satisfy
\begin{equation*}
\sup_n\big\|\bar{\mu}_n\big\|_{L^\infty(\I)} \leq N,
\end{equation*}
then there exist a measure $\mu\ll\L^1,\,\|\bar{\mu}\|_{L^{\8}(\I)}\leq N$ and a subsequence $\mu_{n_k}$ such that $\mu_{n_k}\overset*\rhu \mu$ in $\mathcal{M}(\I)$.
\end{lem}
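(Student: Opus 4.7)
The plan is to reduce the weak-* convergence of the measures $\mu_n$ to the weak-* convergence of their densities $\bar{\mu}_n$ in $L^\infty(\I)$, and then invoke the Banach--Alaoglu theorem. Since each $\mu_n$ is absolutely continuous with respect to Lebesgue measure with density $\bar{\mu}_n \in L^\infty(\I)$, and since the hypothesis $\sup_n \|\bar{\mu}_n\|_{L^\infty(\I)} \le N$ provides a uniform bound, the sequence $\{\bar{\mu}_n\}$ lives in a bounded subset of $L^\infty(\I) = (L^1(\I))'$.

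First I would apply the Banach--Alaoglu theorem to extract a subsequence (still denoted $\bar{\mu}_{n_k}$) and a limit $\bar{\mu}\in L^\infty(\I)$ with $\|\bar{\mu}\|_{L^\infty(\I)}\le N$ such that
\begin{equation*}
\int_{\I} \bar{\mu}_{n_k}\, f\, \d h \;\longrightarrow\; \int_{\I} \bar{\mu}\, f\, \d h \qquad \text{for every } f\in L^1(\I).
\end{equation*}
Here one should recall that $L^1(\I)$ is separable, so bounded sequences in its dual are weakly-* sequentially compact; this is the only nontrivial input and is entirely standard. I would then define $\mu\in\mathcal{M}(\I)$ by $\d\mu := \bar{\mu}\,\d\L^1$, which immediately gives $\mu\ll\L^1$ with density satisfying the claimed $L^\infty$ bound.

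To conclude the weak-* convergence $\mu_{n_k}\overset{*}{\rhu}\mu$ in $\mathcal{M}(\I)$, I only need to test against $f\in C(\I)$. Since $\I$ is compact, $C(\I)\subset L^\8(\I)\subset L^1(\I)$, and the absolute continuity $\mu_n\ll\L^1$ lets me rewrite
\begin{equation*}
\int_{\I} f\, \d\mu_{n_k} \;=\; \int_{\I} f\,\bar{\mu}_{n_k}\,\d h, \qquad \int_{\I} f\, \d\mu \;=\; \int_{\I} f\,\bar{\mu}\,\d h.
\end{equation*}
The weak-* convergence in $L^\infty(\I)$ established above, applied to the test function $f\in L^1(\I)$, immediately yields the desired limit. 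The main (and only) conceptual point is the identification of weak-* convergence in $\mathcal{M}(\I)$ for absolutely continuous measures with weak-* convergence of densities in $L^\infty$ against continuous test functions; no further obstacle arises.
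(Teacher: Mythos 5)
Your proof is correct and follows exactly the route the paper indicates: the paper omits the proof of this lemma, remarking only that it ``comes from the weak-* compactness of $L^\8$ directly,'' which is precisely your Banach--Alaoglu argument on $L^\8(\I)=(L^1(\I))'$ followed by testing against $C(\I)\sse L^1(\I)$.
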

}

From now on, we identify $\mu_n$ with its density $\bar{\mu}_n:=\frac{\d\mu_n}{\d\L^1}$ and do not distinguish them for brevity.
Given a sequence of measures $\mu_n$ such that $\mu_n\ll\L^1$, $\bar{\mu}_n\ge 0$ and $N>0$,
 observe that
\begin{equation}\label{b}
\mu_n = \min\{\mu_n,N\}+\max\{\mu_n,N\}-N.
\end{equation}
From Lemma \ref{lemdunford} we know, upon subsequence, $\min\{\mu_n,N\} \overset*\rhu\mu_-$ for some measure
$\mu_-$ satisfying $\mu_-\ll\L^1$ and $N\ge \mu_-\ge 0$. We also need the following useful Lemma to clarify the relation between $\mu_-$ and the weak-$*$ limit of $\mu_n$.
\begin{lem}\label{lem0105}
 Given a sequence of measures $\mu_n$ such that $\mu_n\ll\L^1$ in $\mathcal{M}(\I)$, ${\mu}_n\ge 0$, we assume moreover that $\mu_n\overset*\rhu\mu$, for some measure $\mu\ge 0$. Then for any $N>0$, there exist $\mu_-,\,\mu_+\in \mathcal{M}(\I),$ such that
 \begin{equation}\label{starlem1}
   \min\{\mu_n,N\}\overset*\rhu \mu_- \text{ in }\mathcal{M}(\I),\quad \mu_-\ll\L^1,\quad \mu_-\leq \mu_{\|},
 \end{equation}
  \begin{equation}\label{starlem2}
   \max\{\mu_n,N\}\overset*\rhu \mu_{+} \text{ in }\mathcal{M}(\I),\quad \mu_{+\|}\ge N,
 \end{equation}
 where $\mu_{\|}$ (resp. $\mu_{\bot}$) is the absolutely continuous part (resp. singular part) of $\mu$. Moreover, for the function $\Phi$ defined in \eqref{defphi}, we have
 \begin{equation}\label{PHI}
   \Phi(\mu_\|)\leq \Phi(\mu_-).
 \end{equation}
\end{lem}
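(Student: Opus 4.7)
The plan is to extract weak-$*$ limits of the truncated sequences $\min\{\mu_n,N\}$ and $\max\{\mu_n,N\}$ via Lemma \ref{lemdunford}, compare these limits with the Lebesgue decomposition of $\mu$, and then read off \eqref{PHI} from pointwise monotonicity of $\Phi$.

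First I would apply Lemma \ref{lemdunford} to $\min\{\mu_n,N\}$: since its density is bounded by $N$, up to a (non-relabeled) subsequence $\min\{\mu_n,N\}\overset*\rhu\mu_-$ with $\mu_-\ll\L^1$ and $\|\bar\mu_-\|_{L^\8(\I)}\le N$. Using identity \eqref{b} I then put
\begin{equation*}
\mu_+:=\mu-\mu_-+N\L^1,
\end{equation*}
so that $\max\{\mu_n,N\}=\mu_n-\min\{\mu_n,N\}+N\L^1\overset*\rhu\mu_+$ along the same subsequence, which yields the convergences announced in \eqref{starlem1} and \eqref{starlem2}.

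The key step is to upgrade the naive weak-$*$ comparison $\mu_-\le\mu$, which follows from $\min\{\mu_n,N\}\le\mu_n$ by testing against $f\in C(\I)$ with $f\ge 0$, to the sharper bound $\mu_-\le\mu_\|$. To do this I would fix an $\L^1$-null Borel set $E\sse\I$ carrying the singular part $\mu_\bot$; since $\mu_-\ll\L^1$, one has $\mu_-(E)=0$, and hence for any Borel $A\sse\I$,
\begin{equation*}
\mu_-(A)=\mu_-(A\setminus E)\le\mu(A\setminus E)=\mu_\|(A\setminus E)\le\mu_\|(A).
\end{equation*}
For the claim $\mu_{+\|}\ge N$ the same recipe, applied to $\max\{\mu_n,N\}\ge N\L^1$, yields $\mu_+\ge N\L^1$; since $N\L^1$ is absolutely continuous, uniqueness of the Lebesgue decomposition of the non-negative measure $\mu_+-N\L^1$ forces its absolutely continuous part $\mu_{+\|}-N\L^1$ to be non-negative a.e., giving \eqref{starlem2}.

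Finally, \eqref{PHI} is a pointwise a.e.\ consequence of the bound $0\le\mu_-\le\mu_\|$ together with the fact that $\Phi$ is non-increasing on $[0,+\8]$ with the convention $\Phi(0)=+\8$. The main obstacle is precisely this refinement from $\mu_-\le\mu$ to $\mu_-\le\mu_\|$: it is where the absolute continuity of $\mu_-$ produced by the $L^\8$ bound of Step~1 must be exploited so that the singular mass $\mu_\bot$ of the target does not artificially enlarge the comparison, and the resulting pointwise inequality is exactly what will later drive the weak-$*$ lower semi-continuity of $\phi$ in Proposition \ref{newlsc}.
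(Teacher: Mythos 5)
Your proposal is correct and follows essentially the same route as the paper: extract $\mu_-$ from Lemma \ref{lemdunford}, use the splitting \eqref{b} to set $\mu_+:=\mu-\mu_-+N\L^1$, deduce $\mu_-\le\mu_\|$ from the nonnegativity of the limit $\mu-\mu_-$ together with $\mu_-\ll\L^1$, and conclude \eqref{PHI} from the monotonicity of $\Phi$. Your carrier-set argument for upgrading $\mu_-\le\mu$ to $\mu_-\le\mu_\|$ is just a more explicit version of the paper's observation that the absolutely continuous part of the nonnegative measure $\mu-\mu_-$ equals $\mu_\|-\mu_-$; note that both you and the paper only obtain the stated convergences along a subsequence, which is all that is needed in Proposition \ref{newlsc}.
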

\begin{proof}
From Lemma \ref{lemdunford} we know, upon subsequence, $\min\{\mu_n,N\} \overset*\rhu\mu_-$ for some measure
$\mu_-$ satisfying $\mu_-\ll\L^1$ and $N\ge \mu_-\ge 0$. By Lebesgue decomposition theorem, there exist unique measures $\mu_\|\ll\L^1$ and $\mu_\bot\bot\L^1$
such that $\mu=\mu_\|+\mu_\bot$.
The decomposition \eqref{b} then gives
$$0\le \mu_n-\min\{\mu_n,N\} = \max\{\mu_n,N\}-N \overset*\rhu\mu - \mu_-.$$
 Taking $\mu_+:=\mu - \mu_-+N$, since the sequence $\max\{\mu_n,N\}-N\ge0$, we know $\max\{\mu_n,N\}\overset*\rhu \mu_{+}$ and $(\mu - \mu_-)_\|=\mu_{+\|}-N\ge 0$. Besides,  since $\Phi(\mu_{\|})$ is decreasing with respect to $\mu_{\|}$, we obtain \eqref{PHI}.
\qed\end{proof}

%{\color{green} My mistake here, I oversimplified the stuff. But we might be able to avoid this.
% As we proved that $\min\{\mu_n,N\} \overset*\rhu\mu_-\ll\L^1$, and
% $$\mu=\mu_-+ (\mu_\|-\mu_-)+\mu_\bot,$$
% with $\mu_\|-\mu_-\ge 0$, we infer
% $\phi(\mu)=\phi(\mu_\|)\le \phi(\mu_-)$.
%Consider a sequence of measures $w_n\ge 0$ with $w_n\ll\L^1$, such that $w_n\overset*\rhu w$. By Lebesgue's decomposition theorem,
%we have $w=w_\|+w_\bot$ with $w_\|\ll\L^1$, $w_\bot\bot\L^1$. Fix an arbitrary $M\gg1$.
%We proved that the truncated sequence $\min\{w_n,M\}\overset*{\rhu}w_-$ for some measure $w_-$, and
%the difference measure $w_\|-w_-\ge 0$. Thus we have $\phi(w_-)\ge \phi(w_\|)$.
%Using the convexity of $\phi$ (hence its weak-* lower semicontinuity) we get
%\begin{align*}
%\liminf_{n\to+\8} \phi(w_n) &\ge \liminf_{n\to+\8} \phi(\min\{w_n,M\}) -M^{-2}\\
%&\ge \phi(w_-) -M^{-2} \ge \phi(w_\|)-M^{-2} = \phi(w)-M^{-2}.
%\end{align*}
%and we conclude by the arbitrariness of $M$.
%}

Now we can start to prove Proposition \ref{newlsc}.

\begin{proof}[Proof of Proposition \ref{newlsc}]
Without loss of generality we may assume $\sup_{n\to+\8} \phi(w_n)<+\8$.
{  This immediately implies  that all $(w_n)_{hh}+c_0$ are positive measures.
Assume $w_n {\overset*\rhu} w$ in $\tdV$, thus we have $(w_n)_{hh}{\overset*\rhu} w_{hh}$ in $\mathcal{M}(\I).$ Denote $f_n:=(w_n)_{hh}+c_0$ and $f:=w_{hh}+c_0$.} Since $\Phi(f_{\|})$ is decreasing with respect to $f_{\|}$, we only concern the case $f_{n\|}$ may weakly-* converge to a singular measure. Thus without loss of generality, we may assume $f_n\ll\L^1$, i.e., $f_{n\bot}=0$.  For any $M>0$ large enough,
denote $\phi_{M}(w_n):=\int_\I  \Phi(\min\{f_n,M\}).$
From the definition of $\Phi$ in \eqref{defphi},
the truncated measures $\min\{f_n,M\}$ satisfy
\begin{align*}
 \phi_M(w_n) &=\int_\I  \Phi(\min\{f_n,M\})\d h\\
 & = \int_{\{f_n\le M\}}\Phi(\min\{f_n,M\})\d h+\frac{1}{2M^2}\L^1(\{f_n> M\})\\
&\ge\int_{\{f_n\le M\}}\Phi(f_n)\d h+\int_{\{f_n> M\}}\Phi(f_n)\d h= \phi(w_n).
\end{align*}
The second equality also shows
\begin{align*}
  \phi_M(w_n)-\frac{1}{2M^2}\L^1(\{f_n> M\})&=\int_{\{f_n\le M\}}\Phi(\min\{f_n,M\})\d h\\
  &=\int_{\{f_n\le M\}}\Phi(f_n)\d h\\
  &\leq \int_\I  \Phi(f_n) \d h =\phi(w_n).
\end{align*}
Hence we obtain
\begin{equation}\label{0104_28o}
  |\phi(w_n)-\phi_M(w_n)|\leq \frac{1}{2M^2}\L^1(\{f_n> M\})\leq \frac{1}{2M^2}.
\end{equation}
%From \eqref{0104_28o}, for $\phi_M(w):=\int_\I  \Phi(\min\{f_{\|},M\})\d h$, we also have
%\begin{equation}\label{0104_28}
%  |\phi(w)-\phi_M(w)|\leq \frac{1}{2M^2}\L^1(\{f_{\|}> M\})\leq \frac{1}{2M^2}.
%\end{equation}

From Lemma \ref{lemdunford} and Lemma \ref{lem0105}, we know the truncated sequence $\min\{f_n,M\}$ satisfies
\begin{equation}\label{star0105_1}
   \min\{f_n,M\}\overset*\rhu f_- \text{ in }\mathcal{M}(\I),\quad f_-\ll\L^1,\quad
   \Phi(f_\|)\leq \Phi(f_-).
 \end{equation}
Hence by the convexity and lower semi-continuity of $\phi$ on $V$, we infer
\begin{equation}\label{conv}
\liminf_{n\to+\8}\int_\I  \Phi(\min\{f_n,M\})\d h\ge \int_\I  \Phi(f_-)\d h\geq \int_\I  \Phi(f_\|)\d h=\phi(w).
\end{equation}
Combining this with \eqref{0104_28o}, we obtain
\begin{equation}
\begin{aligned}
  \liminf_{n\to+\8}\phi(w_n)&\geq \liminf_{n\to+\8} \phi_M(w_n)-\frac{1}{2M^2}\\
  &=\liminf_{n\to+\8}\int_\I  \Phi(\min\{f_n,M\})\d h-\frac{1}{2M^2}\\
  &\geq \phi(w)-\frac{1}{2M^2},
  \end{aligned}
\end{equation}
and thus we complete the proof of Proposition \ref{newlsc} by the arbitrariness of $M$.
\qed\end{proof}
}

\subsection{Maximal monotone and m-accretive operator in $U$.}
In this section, we first define the sub-differential of $\phi+\psi$ and then obtain a useful lemma to ensure $\pd (\phi+\psi)$ is also a maximal monotone operator when restricted to $U$.

 Let $\tilde{B}:= \pd(\phi+\psi):\tdV \to \tdV '$ be
 the sub-differential of $\phi+\psi.$ Let us consider the operator $B$ as the restriction of $\tilde{B}$ from $U$ to $U'$.
\begin{defn}\label{defB}
Define the operator $B: D(B)\sse U\to U'$ such that
\begin{equation*}
  Bw=\tilde{B}w, \text{ for any }w\in D(B)=\{w\in \tdV ;\tilde{B}w\sse U\}.
\end{equation*}
\end{defn}

We first prove $B$ is maximal monotone in $U\times U'$, which is important to prove the existence result.

\begin{lem}\label{dc}
The operator ${ B:D(B)\sse U\to U'}$ in Definition \ref{defB} is maximal monotone in $U\times U'$.
\end{lem}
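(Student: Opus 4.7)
The plan is to reduce the maximal monotonicity of $B$ in the Hilbert space $U$ to Minty's surjectivity criterion: after checking monotonicity, it is enough to show that $R(I+B)=U$ under the Riesz identification $U\simeq U'$. Both ingredients will be extracted from the subdifferential structure $\tilde B=\pd(\phi+\psi)$ on $\tdV$, exploiting the compact embedding $\tdV\hookrightarrow U$ and the weak-$*$ lower semi-continuity of $\phi$ established in Proposition \ref{newlsc}.

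Monotonicity follows directly from the fact that $\tilde B$ is a subdifferential: for any pairs $[w_i,y_i]\in\ggm_B$ ($i=1,2$), we have $y_i\in U\sse\tdV'$ and $w_i\in\tdV$, so
$$\lg y_1-y_2,w_1-w_2\rg_{U',U}=\lg y_1-y_2,w_1-w_2\rg_{\tdV',\tdV}\ge 0.$$

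For the surjectivity I would fix $f\in U$ and seek $w\in D(B)$ with $f-w\in Bw$; the membership $f-w\in U$ is automatic. By the convex-analysis sum rule (applicable since the quadratic term is continuous on $\tdV$), this is equivalent to $w$ being a minimizer on $\tdV$ of
$$G(v):=(\phi+\psi)(v)+\tfrac{1}{2}\|v-f\|_{U}^2.$$
A minimizer would be produced by the direct method. Since $G(0)=\phi(0)+\tfrac{1}{2}\|f\|_U^2=\tfrac{1}{2c_0^2}+\tfrac{1}{2}\|f\|_U^2<+\8$, any minimizing sequence $w_n$ satisfies $\psi(w_n)<+\8$, hence $\|w_n\|_{\tdV}\le C$. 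Banach--Alaoglu yields, up to subsequence, $w_n \overset*{\rhu} w$ in $\tdV$, and the compact embedding $\tdV\hookrightarrow W^{1,2}_{\pero}(\I)\hookrightarrow U$ upgrades this to $w_n\to w$ strongly in $U$. Passing to the liminf in $G$: the $C$-ball in $\tdV$ is weak-$*$ closed so $\psi(w)\le\liminf\psi(w_n)$; Proposition \ref{newlsc} gives $\phi(w)\le\liminf\phi(w_n)$; strong $U$-convergence handles the quadratic term. Hence $G(w)\le\liminf G(w_n)=\inf G$ and $w$ is the desired minimizer, whose Euler--Lagrange condition reads $f-w\in\pd(\phi+\psi)(w)=\tilde B w$, and therefore $f-w\in Bw$.

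The main obstacle is precisely the lower semi-continuity step inside the non-reflexive space $\tdV$: because $\phi$ only sees the absolutely continuous part of $w_{hh}$, it is not a routine convexity consequence and required the separate measure-theoretic proof carried out in Proposition \ref{newlsc}. Once that is in hand, the compactness provided by the embedding $\tdV\hookrightarrow U$ makes the direct method run, and Minty's theorem then yields maximal monotonicity of $B$ in $U\times U'$.
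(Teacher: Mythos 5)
Your argument is correct, and its analytic core coincides with the paper's (Proposition \ref{newlsc} plus the compactness supplied by the $\psi$-ball), but you package it along a genuinely different route. The paper never verifies Minty's criterion itself: it shows that $\phi+\psi$, regarded as a functional on the Hilbert space $U$, is proper, convex and lower semi-continuous for the strong topology of $U$ --- the lower-semicontinuity step being exactly where Proposition \ref{newlsc}, the weak-$*$ compactness of the $C$-ball in $\tdV$ and the compact embedding into $U$ enter --- and then quotes \cite[Theorem 2.8]{Barbu2010} (Rockafellar's theorem) to conclude that the subdifferential is maximal monotone. You instead check monotonicity by hand and prove $R(I+B)=U$ by minimizing $G(v)=(\phi+\psi)(v)+\tfrac12\|v-f\|_U^2$ with the direct method; this amounts to re-proving the quoted theorem in the present instance, with the twist that you obtain the minimizer from weak-$*$ compactness in $\tdV$ rather than from weak compactness in $U$ combined with convexity. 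Your route makes the surjectivity mechanism explicit; the paper's is shorter because the existence step is delegated to the reference. Two points you should spell out: the Euler--Lagrange identity $f-w\in\pd(\phi+\psi)(w)$ uses the convexity of $\phi+\psi$ on $\tdV$, which is not completely routine here since $\phi$ only sees the absolutely continuous part of $w_{hh}$ (one needs that the absolutely continuous part of a convex combination of measures is the convex combination of the absolutely continuous parts --- the paper verifies precisely this inside its proof of Lemma \ref{dc}); and the passage from ``$f-w$ lies in the $U$-subdifferential of $\phi+\psi$ at $w$'' to ``$f-w\in Bw$'' rests on identifying that $U$-subdifferential with the restriction of $\tilde B$ from Definition \ref{defB}, an identification the paper also uses tacitly.
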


\begin{proof}
It suffices to prove that $\phi+\psi$ is (i) proper, i.e. $D(\phi+\psi)\neq \emptyset$, (ii) convex and (iii) lower semi-continuous
when considered as a functional from $(U,\|\cdot\|_U)$ to $\R\cup \{+\8\}$.
(i) First it is clear that $\phi+\psi$ is proper.

\medskip

{\em (ii) Convexity.} Let $u_1,u_2\in U$ be arbitrarily given, and
we need to show
\begin{equation*}
(1-t)(\phi+\psi)(u_1)+t(\phi+\psi)(u_2)\ge (\phi+\psi)((1-t)u_1+tu_2).
\end{equation*}
If either $u_1$ or $u_2$ does not belong to $D(\phi+\psi)$,
then the left-hand side term is $+\8$.
If both $u_1$ and $u_2$ belong to $D(\phi+\psi)$,
then $(1-t)u_1+tu_2$ also belongs to ${ \tdV }\cap\{\|\cdot\|_{ \tdV }\le C\}$, hence
\begin{equation*}
\psi(u_1)=\psi(u_2)=\psi((1-t)u_1+tu_2)=0.
\end{equation*}
Notice the convexity of $\phi$, and the fact that the absolutely continuous part of
$((1-t)u_{1}+tu_2)_{hh}$ is $(1-t)((u_1)_{hh})_{\|}+t((u_2)_{hh})_{\|}$, where $((u_i)_{hh})_{\|}$ are notations representing the absolutely continuous parts of $(u_i)_{hh}$ separately. Then we obtain
\begin{equation*}
(1-t)\phi(u_1)+t\phi(u_2)\ge \phi((1-t)u_1+tu_2).
\end{equation*}
Thus $\phi+\psi$ is convex.

\medskip

{\em (iii) Lower-semicontinuity.} Note that
the lower-semicontinuity is here intended as with respect to the strong
convergence in $U$ (less restrictive than the convergence in ${\tdV }$). Consider an arbitrary sequence $u^n\sse U$ converging to $u\in U$. We need
to prove
\begin{equation*}
(\phi+\psi)(u)\le \liminf_{n\to+\8}(\phi+\psi)(u^n).
\end{equation*}
If $\liminf_{n\to+\8}(\phi+\psi)(u^n)=+\8$ then the thesis is trivial. Thus assume (upon subsequence)
\begin{equation*}
\liminf_{n\to+\8}(\phi+\psi)(u^n)=\lim_{n\to+\8}(\phi+\psi)(u^n)\le D<+\8.
\end{equation*}
Without loss of generality we can further assume $(u^n)_n\sse { \tdV }\cap\{\|\cdot\|_{ \tdV }\le C\}$.
This implies $\|u_{hh}^n\|_{\mathcal{M}(\I)}\le C$,
so
\begin{equation*}
\qquad \psi(u^n)=0,\quad \fal n
\end{equation*}
and there exists $\xi\in \mathcal{M}(\I)$ such that $u_{hh}^n{\overset*\rhu} \xi:=v_{hh}$ in
$\mathcal{M}(\I)$. Since from Proposition \ref{newlsc}, $\phi+\psi$ is convex and weak-* lower-semicontinuous in $\tdV $,
we infer
\begin{equation}
{ \phi}(v)\le \liminf_{n\to+\8} \phi(u^n).\label{f lsc}
\end{equation}
The uniform boundedness of $\|u_{hh}^n\|_{\mathcal{M}(\I)}$ also implies
 $(u^n)_n $ is bounded in $W^{1,\8}(\I)$ (and hence in
$W^{1,p}(\I )$ for any $p<+\8$). Thus $(u^n)_n$ is (upon subsequence) weakly convergent
in $W^{1,p}(\I )$, and strongly convergent in $L^2(\I )$ to $u$. Thus $u_{hh}=\xi=v_{hh}$,
and \eqref{f lsc} now becomes
\begin{equation*}
\phi(u)={ \phi}(v)\le \liminf_{n\to+\8} \phi(u^n).
\end{equation*}

\medskip

Therefore $\phi+\psi:U\lra \R\cup\{+\8\}$ is proper,
convex and lower-semicontinuous. Then by \cite[Theorem 2.8]{Barbu2010} we have that $\pd(\phi+\psi):U\lra U'$ is maximal monotone in $U\times U'$.
\qed\end{proof}

%\begin{lem}\label{lem5}
%The operator $B$ from $D(B)\to U'$ in Definition \ref{defB}
%is surjective.
%\end{lem}
%\begin{proof}
%Since if $\|w\|_{L^2}=+\8$, we have $\|w\|_{\tdV }=+\8.$ Hence there exists $w^0\in D(B)$ such that
%\begin{equation*}
%  \lim_{\|w\|_{L^2{(\I)}}\rightarrow+\8} \frac{{\lg}Bw,w-w^0{\rg_{U',U}}}{\|w\|_{L^2{(\I)}}}
%  \geq \lim_{\|w\|_{L^2{(\I)}}\rightarrow+\8} \frac{(\phi+\psi)(w)-(\phi+\psi)(w^0)}{\|w\|_{L^2{(\I)}}}=+\8,
%\end{equation*}
%
%which implies the coercivity of $B$ in $U$. Since $B$ is maximal monotone in $U\times U'$, from \cite[Corollary 2.2]{Barbu2010}, we know $B$ is surjective.
%\qed\end{proof}

Notice $U'=U$. From Lemma \ref{dc} and the Definition \ref{set} we  deduce
\begin{prop}\label{lem9}
The operator { $B:D(B)\sse U\to U'$} in Definition \ref{defB}
is m-accretive from $D(B)\sse U$ to $U$.
\end{prop}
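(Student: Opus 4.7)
The plan is to deduce Proposition \ref{lem9} directly from Lemma \ref{dc} by invoking the fact that in the Hilbert-space setting, maximal monotonicity and m-accretivity coincide. I will verify the two clauses of Definition \ref{set} separately, pointing out at each step where the Hilbert structure of $U$ is used so that no appeal to non-reflexive considerations is needed here.

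First, I would note that $U=L^2_{\pero}(\I)$ is a Hilbert space, hence by Riesz representation $U'=U$ and the duality mapping $J_U$ is the identity $I$ (as already observed in the remark following Definition \ref{set}). Consequently, for any pair $[u,u'], [v,v']\in \ggm_B$, the element $z:=u-v$ is the unique element of $J_U(u-v)$, and the accretivity inequality $\lg z, u'-v'\rg_{U',U}\ge 0$ reduces to the monotonicity inequality $\lg u'-v', u-v\rg_{U',U}\ge 0$. Since Lemma \ref{dc} asserts that $B$ is maximal monotone in $U\times U'$, in particular $B$ is monotone, so accretivity is immediate.

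Next, for the range condition $R(I+B)=U$, I would invoke Minty's theorem: on a Hilbert space $H$, a monotone operator $A:H\to H$ is maximal monotone if and only if $R(I+A)=H$ (see, e.g., \cite[Theorem 2.2]{Barbu2010}). Applying this to $B$ in $U$, which is maximal monotone by Lemma \ref{dc}, yields $R(I+B)=U$. Combined with accretivity, this gives that $B$ is m-accretive in the sense of Definition \ref{set}.

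There is no substantive obstacle in this argument; the real work has already been accomplished in Lemma \ref{dc} (and in the weak-* lower semicontinuity Proposition \ref{newlsc} that feeds into it). The only point that deserves care is the identification of $U'$ with $U$ and of $J_U$ with the identity, which legitimises translating monotonicity into accretivity without any loss. Once Minty's theorem is applied, the conclusion follows.
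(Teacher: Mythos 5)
Your proposal is correct and follows essentially the same route as the paper: the paper likewise deduces Proposition \ref{lem9} immediately from Lemma \ref{dc} together with the identification $U'=U$ and the Hilbert-space coincidence of maximal monotonicity and m-accretivity. You simply make explicit the standard details (the duality map $J_U=I$ turning monotonicity into accretivity, and Minty's theorem giving $R(I+B)=U$), which the paper leaves implicit.
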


\medskip

\subsection{Existence of variational inequality solution.}

After those preliminary results, we can apply \cite[Theorem 4.5]{Barbu2010} to obtain the existence of variational inequality solution to \eqref{maineq}.

First let us recall \cite[Theorem 4.5]{Barbu2010}.
\begin{thm}(\cite[Theorem 4.5]{Barbu2010})\label{barbuth}
For any $T>0$, let $U$ be a Hilbert space and let $B$ be a m-accretive operator from $D(B)\sse U$ to $U$. Then for each $y_0\in D(B)$, the cauchy problem
\begin{equation*}
  \left\{
     \begin{array}{ll}
       \frac{\d y}{\d t}(t)+By(t)\ni 0, \quad t\in[0,T], \\
       y(0)=y_0,
     \end{array}
   \right.
\end{equation*}
has a unique strong solution $y\in W^{1,\8}([0,T];U)$ in the sense  that
$$-\frac{\d y}{\d t}(t) \in By(t), \quad a.e.\, t\in[0,T],\qquad y(0)=y_0. $$
Moreover, $y$ satisfies the estimate
\begin{equation}\label{es1107_1}
  \|y_t\|_U\leq |-By_0|_\star,
\end{equation}
where $|-By_0|_\star=\inf\{\|u\|_U;u\in -By_0\}.$
\end{thm}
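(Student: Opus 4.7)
The plan is to prove this classical result via the Yosida approximation, the standard route for Cauchy problems driven by m-accretive operators in Hilbert spaces. First I would introduce the resolvent $J_\la := (I + \la B)^{-1}$, which is single-valued, globally defined, and non-expansive because $B$ is m-accretive, together with the Yosida regularization $B_\la := \frac{1}{\la}(I - J_\la)$, which is everywhere defined, monotone, and Lipschitz on $U$ with constant $1/\la$. By the Cauchy-Lipschitz theorem, the approximating problem
\begin{equation*}
\frac{\d y_\la}{\d t}(t) + B_\la y_\la(t) = 0, \qquad y_\la(0) = y_0,
\end{equation*}
then admits a unique global classical solution $y_\la \in C^1([0,T];U)$.

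Next I would derive two a priori estimates. Monotonicity of $B_\la$ and a short energy-type computation show that $t \mapsto \|B_\la y_\la(t)\|_U$ is nonincreasing, so $\|\pd_t y_\la(t)\|_U = \|B_\la y_\la(t)\|_U \le \|B_\la y_0\|_U \le |-By_0|_\star$, using the classical fact $\|B_\la u\|_U \le |Bu|_\star$ for $u \in D(B)$. For a Cauchy property in $\la$, I would differentiate $t \mapsto \|y_\la(t) - y_\mu(t)\|_U^2$ and use the identity $y_\la - J_\la y_\la = \la B_\la y_\la$ together with the monotonicity of $B$ applied at the pairs $(J_\la y_\la, B_\la y_\la)$ and $(J_\mu y_\mu, B_\mu y_\mu)$; this yields a bound of order $(\la+\mu)\,T\,|-By_0|_\star^2$, forcing $\{y_\la\}$ to be Cauchy in $C([0,T];U)$ and converging to some $y \in W^{1,\8}([0,T];U)$ with $\|y_t\|_U \le |-By_0|_\star$, which already gives \eqref{es1107_1}.

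Finally I would identify the limit equation. Since $\|J_\la y_\la - y_\la\|_U = \la \|B_\la y_\la\|_U \to 0$, one has $J_\la y_\la \to y$ uniformly on $[0,T]$, while $B_\la y_\la \in B(J_\la y_\la)$ by definition of the Yosida approximation, and, modulo subsequences, $B_\la y_\la$ converges weakly-$*$ to $-\pd_t y$ in $L^\8([0,T];U)$. Maximal monotonicity of $B$ in $U\times U$ ensures that its graph is closed under strong--weak convergence, so a Minty-type pairing against arbitrary test points $[u,v]\in \ggm_B$ forces $-\pd_t y(t)\in By(t)$ for a.e.\ $t$. Uniqueness is then immediate from accretivity: $\tfrac{1}{2}\tfrac{\d}{\d t}\|y-\tilde y\|_U^2 = -\lg \pd_t y - \pd_t \tilde y,\, y - \tilde y\rg_{U',U} \le 0$ for any two solutions with the same initial datum. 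The main obstacle is precisely this identification of the limit inclusion: upgrading the pointwise-in-$\la$ membership $B_\la y_\la \in B(J_\la y_\la)$---in which both the argument and the value depend on $\la$---to a pointwise-in-$t$ inclusion for the weak-$*$ limit. It is here that m-accretivity, through the Minty characterization $v\in Bu \Leftrightarrow \lg v'-v,\, u'-u\rg_{U',U} \ge 0$ for all $[u',v']\in \ggm_B$, is essential; without it, one could only pass to a weaker notion of solution (the mild solution in the sense of Crandall-Liggett), and strong differentiability a.e.\ would be lost.
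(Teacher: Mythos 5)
This theorem is not proved in the paper at all: it is quoted verbatim as \cite[Theorem 4.5]{Barbu2010} and used as a black box, so there is no in-paper argument to compare against. Your Yosida-approximation proof (resolvent $J_\la$, Lipschitz regularized flow, the two a priori estimates, the Cauchy property in $\la$, and the Minty/demiclosedness identification of the limit inclusion) is the standard and correct proof of that cited result, essentially the one found in Barbu's and Brezis's monographs, so there is nothing to object to.
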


 Proposition \ref{lem9} shows that $B$ defined in Definition \ref{defB} is m-accretive from $D(B)\sse U$ to $U$. Hence we can apply Theorem \ref{barbuth} to obtain
\begin{thm}\label{mainth}
Let $B: D(B)\sse U\to U$  be the operator defined in Definition \ref{defB}. Given $T>0$, initial datum $w^0\in D(B)$, then
\begin{enumerate}[(i)]
  \item there exists a unique function $w\in W^{1,\8}([0,T];U) $ such that
        \begin{equation}\label{slu1}
        -w_t(t)\in Bw(t) ,\qquad w(0)=w^0, \quad\text{for } a.e.\, t\in[0,T].
        \end{equation}
  \item $w$ is also
        a variational inequality solution to \eqref{maineq}. {  Moreover,
        \begin{equation}\label{positive}
        \begin{aligned}
        &w_{hh}+c_0\in \mathcal{M}^+(\I),\text{ for a.e. }(t,h)\in[0,T]\times\I,\\
        &\eta +c_0>0, \text{ for a.e. }(t,h)\in[0,T]\times\I,
        \end{aligned}
        \end{equation}
         where a.e. means with respect to the Lebesgue measure, $\eta $ is the absolutely continuous part of $w_{hh}$ in \eqref{decom}, and $\mathcal{M}^+(\I)$ denotes the set of positive {Radon} measures.}
\end{enumerate}

\end{thm}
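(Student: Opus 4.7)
The plan is to obtain part (i) as a direct application of the abstract existence theorem (Theorem \ref{barbuth}) to the operator $B$ on the Hilbert space $U$, and then to extract both the variational inequality \eqref{vi} and the positivity statements in part (ii) by unpacking the abstract inclusion $-w_t(t)\in Bw(t)$ through the definition of the subdifferential together with the very definition \eqref{defphi} of $\phi$.

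First I would invoke Theorem \ref{barbuth} with the Hilbert space $U$, the operator $B$ from Definition \ref{defB}, and initial datum $w^0\in D(B)$. Proposition \ref{lem9} already asserts that $B$ is m-accretive from $D(B)\sse U$ to $U$, so all hypotheses of Theorem \ref{barbuth} are satisfied. This directly produces a unique $w\in W^{1,\infty}([0,T];U)$ satisfying \eqref{slu1}, together with the quantitative bound $\|w_t\|_U\le|-Bw^0|_\star$ from \eqref{es1107_1}, which establishes part (i).

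For part (ii), I would next derive the variational inequality. Lemma \ref{dc} identifies $B$ with the subdifferential of $\phi+\psi$ when the latter is viewed as a proper convex lower semicontinuous functional on $U$ (extended by $+\infty$ on $U\setminus\tdV$); hence, by Definition \ref{def4}, the inclusion $-w_t(t)\in Bw(t)$ is equivalent to
\begin{equation*}
(\phi+\psi)(v)-(\phi+\psi)(w(t))\ge \langle -w_t(t),v-w(t)\rangle_{U',U}\qquad \forall\, v\in U,
\end{equation*}
and restricting to $v\in\tdV$ yields \eqref{vi} (for $v\in U\setminus\tdV$ the left-hand side equals $+\infty$, so no information is lost). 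The required regularity is then automatic: $w\in C^0([0,T];U)$ and $w_t\in L^\infty([0,T];U)$ follow from $w\in W^{1,\infty}([0,T];U)$, while the uniform bound $\|w(t)\|_{\tdV}\le C$ for a.e.\ $t$ is forced by the finiteness $\psi(w(t))<+\infty$. For the positivity \eqref{positive}, the inclusion $-w_t(t)\in Bw(t)$ entails $w(t)\in D(B)\sse D(\phi+\psi)$, so $\phi(w(t))<+\infty$ for a.e.\ $t$. By the definition \eqref{defphi}, this forces $w_{hh}(t)+c_0\in\mathcal{M}^+(\I)$, and the finiteness of $\int_\I \Phi(g(t))\d h$ combined with $\Phi(x)=+\infty$ for $x\le 0$ forces $g(t)=\eta(t)+c_0>0$ for a.e.\ $h\in\I$. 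A Fubini argument then delivers \eqref{positive} for a.e.\ $(t,h)\in[0,T]\times\I$.

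The hard parts of the argument, namely the weak-$*$ lower semicontinuity of $\phi$ on $\tdV$ and the m-accretivity of $B$ on $U$, have already been carried out in Proposition \ref{newlsc} and Lemma \ref{dc} respectively, so no fundamentally new difficulty is expected here. The only remaining subtlety is the clean identification of the restricted operator $B$ from Definition \ref{defB} (viewed as the restriction of $\tilde B=\partial(\phi+\psi):\tdV\to\tdV'$) with the subdifferential of $\phi+\psi$ computed intrinsically on the Hilbert space $U$; this identification is precisely what Lemma \ref{dc} secures, once one checks that the extension of $\phi+\psi$ by $+\infty$ outside $\tdV$ is proper, convex and lower semicontinuous on $U$.
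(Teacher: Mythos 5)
Your proposal is correct and follows essentially the paper's own argument: part (i) is the direct application of Theorem \ref{barbuth} via the m-accretivity from Proposition \ref{lem9}, and part (ii) unpacks the subdifferential inclusion $-w_t(t)\in Bw(t)=\pd(\phi+\psi)(w(t))$ into the variational inequality, with the regularity coming from $w\in W^{1,\8}([0,T];U)$. The only (harmless) variation is that you deduce $w(t)\in D(\phi+\psi)$, hence $\psi(w(t))=0$ and $\phi(w(t))<+\8$, directly from the nonemptiness of the subdifferential at $w(t)$, whereas the paper tests \eqref{vi2} with a fixed $v$ satisfying $(\phi+\psi)(v)\le 1$ to obtain the bound \eqref{45}; both routes give $w\in L^\8([0,T];\tdV)$ and the positivity \eqref{positive}.
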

\begin{proof}
  {{Proof of (i).}} From Proposition \ref{lem9}, we know $B$ is a m-accretive operator in $U\times U$. So \eqref{slu1}
  {follows from}
  Theorem \ref{barbuth} and we have $w\in W^{1,\8}([0,T];U) $. From \eqref{es1107_1}, we also have
  \begin{equation}\label{es1107_2}
    \|w_t\|_U\leq |-Bw_0|_\star,
  \end{equation}
where $|-Bw_0|_\star=\inf\{\|u\|_U;u\in -Bw_0\}.$

{{Proof of (ii).}} Since
$-w_t(t)\in Bw(t)=\pd(\phi+\psi)( w)$ for $a.e.\, t\in[0,T]$ and $w\in W^{1,\8}([0,T];U) $, we see from Definition \ref{def4} that
\begin{equation}\label{vi2}
 \lg w_t,v-w\rg_{{ U',U}} +(\phi+\psi)(v)-(\phi+\psi)(w)\ge 0, \quad \text{a.e. } \,t\in[0,T]
 \end{equation}
 for all $v\in U$,
and
$$w\in C^0([0,T];U),\quad w_t\in L^\8([0,T];U).$$
Choose a function $v\in U$ such that $(\phi+\psi)(v)\leq 1$. Then from \eqref{vi2}, we also have
\begin{equation}\label{45}
  (\phi+\psi)(w)\leq \|w_t\|_U\|w-v\|_U+1 \quad\text{  for a.e. }t\in[0,T].
\end{equation}
This implies
$$w\in L^\8([0,T];\tdV )$$
and {with respect to the Lebesgue measure,
$$  \eta +c_0>0\,\,  \text{ for a.e. }(t,h)\in[0,T]\times\I,$$
$$w_{hh}+c_0\in \mathcal{M}^+(\I)\,\, \text{ for a.e. }(t,h)\in[0,T]\times\I,$$
where $\eta $ is the absolutely continuous part of $w_{hh}$ in \eqref{decom}.}
Therefore we obtain the variational inequality solution to \eqref{maineq} and $w$
{satisfies} the positivity {property} \eqref{positive}.
\qed\end{proof}

\section{Existence of strong solution}\label{sec4}
Although we obtained a unique variational inequality solution in Theorem \ref{mainth}, we do not know whether $B$ is single-valued and which element belongs to $B$. We will prove the variational inequality solution is actually a strong solution in this section.

Now we assume $$w\in L^\8([0,T];\tdV )\cap C^0([0,T];U),\quad w_t\in L^\8([0,T];U)$$
 is the  variational inequality solution to \eqref{maineq}, i.e., $w$ satisfies
  \begin{equation}\label{8vi}
 \lg w_t(t),v-w(t)\rg_{{ U',U}} +(\phi+\psi)(v)-(\phi+\psi)(w(t))\ge 0
 \end{equation}
  for a.e. $t\in[0,T]$ and all $v\in { \tdV }$.

Let $\vph\in C^{\8}(\I)$ be given. The idea is to test \eqref{8vi} with $v:=w\pm\vep \vph$.
However, in general this is not possible, since it is not guaranteed that $v=w\pm\vep \vph\in D(\phi+\psi)$. To handle this difficulty, we will use the truncation method in \cite{Leoni2015} to truncate $w_{hh\|}$ from below such that $v=w\pm\vep \vph\in D(\phi+\psi)$ for small $\vep$.
Let us state existence result for strong solution as follows.

\begin{thm}\label{mainth1}
  Given $T>0$, initial datum $w^0\in D(B)$, then the variational inequality solution $w$ obtained in Theorem \ref{mainth} is  also a strong solution
  %{(   ? if the decomposition is unique then $w$ is unique strong solution -- yes)}
  to \eqref{maineq}, i.e.,
        \begin{equation}\label{main21_3}
         w_t=((\eta +c_0)^{-3})_{hh}
        \end{equation}
        for a.e. $(t,h)\in[0,T]\times \I$.
        Besides, we have $((\eta +c_0)^{-3})_{hh}\in L^\8([0,T];U)$ and the dissipation inequality
         \begin{equation}\label{dissiE}
           E(t):=\frac{1}{2}\int_\I \big[((\eta +c_0)^{-3})_{hh}\big]^2 \d h\leq E(0),
         \end{equation}
        where $\eta $ is the absolutely continuous part of $w_{hh}$ in \eqref{decom}.
\end{thm}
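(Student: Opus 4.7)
My plan is to carry out the Minty-type argument outlined in Section 1.3: test the variational inequality \eqref{8vi} with $v = w \pm \varepsilon \varphi$ for smooth zero-mean $\varphi \in C^\infty(\I)$, then divide by $\varepsilon$ and send $\varepsilon \to 0^\pm$. Writing $g = \eta + c_0$ for the absolutely continuous part of $w_{hh}+c_0$ as in Definition \ref{defpp}, the Lebesgue decomposition of $(w + \varepsilon\varphi)_{hh} + c_0$ has absolutely continuous part $g + \varepsilon\varphi_{hh}$ and unchanged singular part, so
\begin{equation*}
\phi(w+\varepsilon\varphi) - \phi(w) = \int_\I \bigl[\Phi(g+\varepsilon\varphi_{hh})-\Phi(g)\bigr]\d h = -\varepsilon\int_\I g^{-3}\varphi_{hh}\d h + o(\varepsilon).
\end{equation*}
Combined with both signs of $\varepsilon$ in \eqref{8vi}, this gives the identity $\langle w_t,\varphi\rangle_{U',U} = \int_\I g^{-3}\varphi_{hh}\d h$ in the sense of distributions, equivalently $w_t = (g^{-3})_{hh}$. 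Since $w_t \in L^\infty([0,T];U)$ from Theorem \ref{mainth}, this immediately upgrades to the a.e.\ equality \eqref{main21_3} together with the regularity $(g^{-3})_{hh} \in L^\infty([0,T];U)$.

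\textbf{Truncation step.} The manipulation above is formal because $v = w \pm \varepsilon\varphi$ need not belong to $D(\phi+\psi)$: the a.e.\ positivity $g > 0$ from \eqref{positive} carries no uniform lower bound, so for any fixed $\varepsilon \neq 0$ the function $g + \varepsilon\varphi_{hh}$ may fail to be positive on a set of positive measure, forcing $\phi(v) = +\infty$. Following the truncation device of \cite{Leoni2015}, for each $\delta > 0$ I would set $A_\delta := \{h \in \I : g(h) > \delta\}$, pick a smooth cutoff $\chi_\delta$ compactly supported in $A_\delta$ with $\chi_\delta \to 1$ a.e.\ as $\delta \to 0$, and run the argument with $\chi_\delta \varphi$ (corrected by a constant to preserve mean zero) in place of $\varphi$. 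For $|\varepsilon|$ small relative to $\delta$ the perturbation is admissible, and letting $\delta \to 0$ with dominated convergence --- justified by the a.e.\ positivity of $g$ and the $L^\infty_t(U)$ bound on $w_t$ --- recovers the identity for arbitrary $\varphi$. The constraint $\|v\|_{\tdV}\leq C$ encoded in $\psi$ causes no trouble, since by Observation 3 the constant $C$ is chosen strictly larger than $\|w(t)\|_{\tdV}$, so small perturbations stay inside $\mathcal{K}$ and contribute $\psi(v)=\psi(w)=0$ to \eqref{8vi}.

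\textbf{Dissipation and main obstacle.} Rather than making the formal chain in Observation 2 rigorous (which would demand regularity well beyond what is currently available), I would invoke the Brezis theory already underlying Theorem \ref{barbuth}: for a maximal monotone operator $B = \partial(\phi+\psi)$ on a Hilbert space, the solution of $w_t + Bw \ni 0$ satisfies $w_t(t) = -B^0 w(t)$ with $B^0$ the minimal section, and the map $t \mapsto \|w_t(t)\|_U$ is non-increasing. Combined with the identification $w_t = (g^{-3})_{hh}$ just established, this delivers $E(w(t)) = \tfrac{1}{2}\|w_t(t)\|_U^2 \leq \tfrac{1}{2}\|w_t(0)\|_U^2 = E(w(0))$, which is \eqref{dissiE}. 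The principal obstacle in the whole argument is the truncation--limit passage: one must verify that multiplying $\varphi$ by $\chi_\delta$ creates no new singular mass in $(v)_{hh}$, that the constant correction enforcing mean zero can be absorbed into the argument, and that the limit $\delta \to 0$ commutes with the integral $\int_\I g^{-3}\varphi_{hh}\d h$ uniformly in $t$, so that the resulting PDE holds on a single full-measure subset of $[0,T]\times\I$ rather than on a $t$-dependent set.
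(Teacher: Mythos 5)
Your overall strategy (Minty's trick on the variational inequality \eqref{8vi}, a truncation to make the perturbation admissible, and the Brezis/Barbu estimate $\|w_t(t)\|_U\le|-Bw^0|_\star$ for the dissipation) is the same as the paper's, and your treatment of the constraint $\psi$ and of the dissipation inequality is essentially correct. However, there are two genuine gaps in the core of the argument.

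First, your truncation is applied to the test function rather than to the solution, and as described it does not work. The set $A_\delta=\{h:g(h)>\delta\}$ is only a Lebesgue-measurable set (since $g\in L^1(\I)$); it need not contain any open set, so there may be no nonzero smooth $\chi_\delta$ whose support is contained in $A_\delta$. Even granting such cutoffs, $(\chi_\delta\varphi)_{hh}=\chi_\delta''\varphi+2\chi_\delta'\varphi'+\chi_\delta\varphi''$ involves derivatives of $\chi_\delta$ that blow up as $\delta\to0$, so the passage $\int_\I g^{-3}(\chi_\delta\varphi)_{hh}\d h\to\int_\I g^{-3}\varphi_{hh}\d h$ is not a dominated-convergence statement and has no obvious justification: the extra terms concentrate near $\partial A_\delta$, exactly where $g$ is small. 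The paper avoids all of this by truncating the solution instead: it sets $w^\dt_{hh}:=w_{hh}+\dt\mathbf{1}_{E_\dt}$ with $E_\dt=\{w_{hh\|}\le\dt-c_0\}$ and couples $\vep=\dt/(2\|\vph\|_{W^{2,\8}(\I)}+1)$, so that $v=w^\dt\pm\vep\vph$ lies in $D(\phi+\psi)$ with the \emph{unmodified} test function $\vph$, and the error $\lg w_t,w^\dt-w\rg/\vep$ is controlled by $\dt|E_\dt|^{1/2}/\vep\to0$.

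Second, and more importantly, you never establish $(w_{hh\|}+c_0)^{-3}\in L^1(\I)$, which is needed both to make sense of the expansion $\Phi(g+\vep\vph_{hh})-\Phi(g)=-\vep g^{-3}\vph_{hh}+o(\vep)$ and to define the right-hand side of \eqref{main21_3}. Finiteness of $\phi(w)$ only gives $g^{-2}\in L^1(\I)$, and the a.e.\ positivity \eqref{positive} is far from sufficient to dominate $g^{-3}$; this is precisely the content of the paper's Step~2 (its equation \eqref{L1}), which is proved by the non-obvious device of testing \eqref{8vi} with the dilation $v=(1-\vep)w(t)$ and using monotone convergence on $\{w_{hh\|}<0\}$. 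Without this integrability result your ``dominated convergence'' limit has no dominating function, and the identity $w_t=(g^{-3})_{hh}$ is not yet meaningful. The dissipation part of your argument is fine once \eqref{main21_3} is in hand.
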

\begin{proof}

{\bf Step 1. Truncate $w_{hh\|}$ from below.}

Assume $w$ is the variational inequality solution to \eqref{maineq}. Choose an arbitrary $t$ for which the variational inequality \eqref{8vi} holds.
Let $\vph\in C^{\8}(\I)$ be given. Denote by $w_{hh\|}(t)$ (resp. $w_{hh\bot}(t)$) the absolutely continuous part (resp. singular part)
of $w_{hh}(t)$. In the following, we truncate $w_{hh}(t)$ below.
Let
\begin{equation}\label{trun01}
w^\dt_{hh}(t):= w_{hh}(t)+\dt\mathbf{1}_{E_\dt},\qquad E_\dt:=\{w_{hh\|}(t)\le \dt-c_0\}.
\end{equation}
We remark here a constant $-\delta |E_\delta|$ should be added to ensure the periodic setting, however we omit it for simplicity since the proof is same.
Since $w_{hh\|}(t)+c_0>0$ a.e., we can see
 \begin{equation}\label{tm911_1}
 |E_\dt|\to 0 \text{ as } \dt\to0.
 \end{equation}
Let
\begin{equation}
v:=w^\dt(t)+\vep \vph,\qquad \vep:=\frac{\dt}{2\|\vph\|_{W^{2,\8}(\I)}+1}.
\label{v}
\end{equation}
Now we prove $v=w^\dt(t)+\vep \vph\in D(\phi+\psi)$.
Note that
\begin{equation}\label{vphr}
|\vep \vph_{hh}|\leq \vep\|\vph\|_{W^{2,\8}(\I)}\leq \frac{\dt}{2},
\end{equation}
due to \eqref{v}.
First from
\begin{equation*}
v_{hh\|}+c_0=w_{hh\|}^\dt(t)+\vep \vph_{hh}+c_0 \ge  \dt - \vep \|\vph_{hh}\|_{L^\8(\I)} \ge \dt/2
\end{equation*}
we know
$v\in D(\phi).$
  Second from $w_{hh}+c_0\in \mathcal{M}^+$,  we know
 \begin{align*}
 \int_\I |w_{hh}| \d h &\leq \int_\I |w_{hh}+c_0|+|c_0| \d h\\
 &\leq \int_\I w_{hh}+c_0 \d h+\int_\I |c_0|\d h\\
 &= c_0+|c_0|=2c_0
\end{align*}
due to $c_0$ is positive.
 Hence we can choose $C:=2c_0+1$ in Definition \eqref{defpsi} to ensure $\|w\|_{L^\8(0,T;\~V)}\leq C-1$.
Then by construction, $v$ satisfies
\begin{align*}
\|v_{hh}\|_{\mathcal{M}(\I)} &=\|w_{hh}(t)+\dt\mathbf{1}_{E_\dt}+\vep\vph_{hh}\|_{\mathcal{M}(\I)}\\
&\le \|w_{hh}(t)\|_{\mathcal{M}(\I)} + \dt|E_\dt| + \vep\|\vph_{hh}\|_{\mathcal{M}(\I)}
\le C-1+\dt|E_\dt|+\dt/2,
\end{align*}
which implies
\begin{equation*}
\|v\|_{\~V}=\|v_{hh}\|_{\mathcal{M}(\I)}\le C
\end{equation*}
and $v\in D(\psi)$  for all sufficiently small  $\dt.$

\bigskip

{\bf Step 2. Integrability results. }

We claim
\begin{equation}\label{L1}
(w_{hh\|}(t)+c_0)^{-3}\in L^1(\I)
\end{equation}
\begin{equation}
\label{well-defined}
(w_{hh\|}^\dt(t)+\vep\vph_{hh}+c_0)^{-3}\in L^1(\I),
\end{equation}
for all sufficiently small $\dt$.

{\em Proof of \eqref{L1}}. First, for all $0<\vep\ll1$ we have
$$\|(1-\vep)w(t)\|_{\~V}=(1-\vep)\|w(t)\|_{\~V}$$
which implies $(1-\vep)w(t)\in D(\psi).$
Moreover, on $\{w_{hh\|}(t)\ge 0\}$ we have $(1-\vep)w_{hh\|}(t)+c_0\ge c_0>0$,
while on $\{w_{hh\|}(t)\le 0\}$ we have $(1-\vep)w_{hh\|}(t)\ge w_{hh\|}(t)$. Hence
$(1-\vep)w_{hh\|}(t)+c_0\ge w_{hh\|}(t)+c_0>0$ a.e., and
\begin{align*}
\int_{\I} ((1-\vep)w_{hh\|}(t)+c_0)^{-2}\d h& = \int_{\{w_{hh\|}(t)\ge0\}} ((1-\vep)w_{hh\|}(t)+c_0)^{-2}\d h\\
&+
\int_{\{w_{hh\|}(t)<0\}} ((1-\vep)w_{hh\|}(t)+c_0)^{-2}\d h\\
& \le \int_{\{w_{hh\|}(t)\ge0\}} c_0^{-2}\d h +\int_{\{w_{hh\|}(t)<0\}} (w_{hh\|}(t)+c_0)^{-2}\d h<+\8.
\end{align*}
Thus we have $(1-\vep)w(t)\in D(\phi)$.

Next,
setting $v=(1-\vep)w(t)\in D(\phi+\psi)$ in \eqref{8vi}, we get
\begin{equation}
\lg w_t(t),-\vep w(t)\rg+\phi((1-\vep)w(t))-\phi(w(t))\ge 0.\label{w vi}
\end{equation}
Direct computation gives
\begin{align*}
\phi((1-\vep)w(t))-\phi(w(t)) &= \frac12\int_\I [((1-\vep)w_{hh\|}(t)+c_0)^{-2}-(w_{hh\|}(t)+c_0)^{-2}]\d h\\
&=\frac12\int_\I \frac{(w_{hh\|}(t)+c_0)^{2}-((1-\vep)w_{hh\|}(t)+c_0)^{2}}{((1-\vep)w_{hh\|}(t)+c_0)^{2}(w_{hh\|}(t)+c_0)^{2}}\d h\\
&=\vep\int_\I \frac{w_{hh\|}(t)}{((1-\vep)w_{hh\|}(t)+c_0)^{2}(w_{hh\|}(t)+c_0)}\d h\\
&\qquad-\frac{\vep^2}{2}\int_\I \frac{|w_{hh\|}(t)|^2}{((1-\vep)w_{hh\|}(t)+c_0)^{2}(w_{hh\|}(t)+c_0)^2}\d h.
\end{align*}
Hence \eqref{w vi} gives
\begin{align*}
0&\le
\lg w_t(t),-\vep w(t)\rg + \vep\int_\I \frac{w_{hh\|}(t)}{((1-\vep)w_{hh\|}(t)+c_0)^{2}(w_{hh\|}(t)+c_0)}\d h\\
&=\lg w_t(t),-\vep w(t)\rg +\vep\int_\I \frac{1}{((1-\vep)w_{hh\|}(t)+c_0)^{2}}\d h
-\vep\int_\I \frac{c_0}{((1-\vep)w_{hh\|}(t)+c_0)^{2}(w_{hh\|}(t)+c_0)}\d h.
\end{align*}
This, together with $|\lg w_t(t), w(t)\rg|\le \|w_t(t)\|_U\|w(t)\|_U$, shows that
\begin{equation}\label{tm0906_1}
-\8<\lg w_t(t), w(t)\rg \le \int_\I \frac{1}{((1-\vep)w_{hh\|}(t)+c_0)^{2}}\d h -
\int_\I \frac{c_0}{((1-\vep)w_{hh\|}(t)+c_0)^{2}(w_{hh\|}(t)+c_0)}\d h
\end{equation}
for all $0<\vep\ll1$.
For the first term on the right hand side of \eqref{tm0906_1}, note
\begin{align*}
\frac{1}{((1-\vep)w_{hh\|}(t)+c_0)^{2}} &\to \frac{1}{(w_{hh\|}(t)+c_0)^{2}} \qquad \text{for a.e. }h,\\
 \frac{1}{((1-\vep)w_{hh\|}(t)+c_0)^{2}}  &\le c_0^{-2} \qquad \text{on } \{w_{hh\|}(t)\ge 0\},\\
 \frac{1}{((1-\vep)w_{hh\|}(t)+c_0)^{2}} &\le
 \frac{1}{(w_{hh\|}(t)+c_0)^{2}}\qquad \text{on } \{w_{hh\|}(t)< 0\},
\end{align*}
where $(w_{hh\|}(t)+c_0)^{-2}\in L^1(\I)$ due to $w\in D(\phi)$. Thus by Lebesgue's dominated convergence theorem
we have
\begin{equation}\label{tm_a}
\int_\I \frac{1}{((1-\vep)w_{hh\|}(t)+c_0)^{2}}\d h \to \int_\I \frac{1}{(w_{hh\|}(t)+c_0)^{2}}\d h=2\phi(w(t)).
\end{equation}
%which then forces
%\begin{equation*}
%-\8<\lg w_t(t), w(t)\rg - 2\phi(w(t)) \le c_0 \int_\I \frac{1}{((1-\vep)w_{hh\|}(t)+c_0)^{2}(w_{hh\|}(t)+c_0)}\d h \qquad
%\fal 0<\vep\ll1.
%\end{equation*}
For the second term on the right hand side of \eqref{tm0906_1},
notice that on $\{w_{hh\|}(t)\ge 0\}$ we have
$$\frac{1}{((1-\vep)w_{hh\|}(t)+c_0)^{2}(w_{hh\|}(t)+c_0)}  \le c_0^{-3},$$
which implies
\begin{equation}
\label{tm_b}
 \int_{\{w_{hh\|}(t)\ge 0\}} \frac{1}{((1-\vep)w_{hh\|}(t)+c_0)^{2}(w_{hh\|}(t)+c_0)}\d h \to  \int_{\{w_{hh\|}(t)\ge 0\}} \frac{1}{(w_{hh\|}(t)+c_0)^{3}}\d h
 \end{equation}
due to Lebesgue's dominated convergence theorem.
On the other hand, on $\{w_{hh\|}(t)< 0\}$
\begin{align*}
\frac{1}{((1-\vep)w_{hh\|}(t)+c_0)^{2}(w_{hh\|}(t)+c_0)}
\end{align*}
is increasing with respect to $\vep$. Hence by the monotone convergence theorem we have
\begin{equation}
\label{tm_c}
\int_{\{w_{hh\|}(t)<0\}} \frac{1}{((1-\vep)w_{hh\|}(t)+c_0)^{2}(w_{hh\|}(t)+c_0)}\d h \to  \int_{\{w_{hh\|}(t)< 0\}} \frac{1}{(w_{hh\|}(t)+c_0)^{3}}\d h.
\end{equation}
Combining \eqref{tm_a}, \eqref{tm_b} and \eqref{tm_c}, we can take $\vep\to 0$ in \eqref{tm0906_1} to see that
\begin{align*}
-\8&<\lg w_t(t), w(t)\rg - 2\phi(w(t))\\
& \le -
c_0\int_\I \frac{1}{(w_{hh\|}(t)+c_0)^{2}(w_{hh\|}(t)+c_0)}\d h,
\end{align*}
which completes the proof of \eqref{L1}.

\medskip

{\em Proof of \eqref{well-defined}}.
Note that
\begin{equation*}
\int_\I (w_{hh\|}^\dt(t)+\vep\vph_{hh}+c_0)^{-3}\d h=\int_{\I\backslash E_\dt} (w_{hh\|}^\dt(t)+\vep\vph_{hh}+c_0)^{-3}\d h
+\int_{E_\dt} (w_{hh\|}^\dt(t)+\vep\vph_{hh}+c_0)^{-3}\d h.
\end{equation*}
First, on $\I\backslash E_\dt=\{w_{hh\|}(t) +c_0\ge \dt  \}$ we have $w_{hh\|}^\dt(t)=w_{hh\|}(t)$. Thus from \eqref{vphr} we have
\begin{align}
w_{hh\|}^\dt(t)+\vep\vph_{hh}+c_0 &=w_{hh\|}(t)+\vep\vph_{hh}+c_0 \ge w_{hh\|}(t)+c_0 -\dt/2
\ge(w_{hh\|}(t)+c_0)/2\label{upper estimate}
\end{align}
on $\I\backslash E_\dt$ and
\begin{equation}\label{well-defined-1}
\int_{\I\backslash E_\dt} (w_{hh\|}^\dt(t) +\vep\vph_{hh}+c_0)^{-3}\d h
\le 8\int_{\I\backslash E_\dt} (w_{hh\|}(t)+c_0)^{-3}\d h.
\end{equation}
Second, on
$E_\dt=\{w_{hh\|}(t) +c_0< \dt  \}$ we have $w_{hh\|}^\dt(t)=\dt+w_{hh\|}(t)$, so by \eqref{vphr} we know
\begin{equation}
w_{hh\|}^\dt(t) +\vep\vph_{hh}+c_0 =
w_{hh\|}(t)+\dt +\vep\vph_{hh}+c_0\ge w_{hh\|}(t)+c_0+\dt/2\ge (3/2)(w_{hh\|}(t)+c_0)
\label{low estimate}
\end{equation}
on
$E_\dt$ and
\begin{equation}\label{well-defined-2}
\int_{E_\dt} (w_{hh\|}^\dt(t)+\vep\vph_{hh}+c_0)^{-3}\d h
\le \int_{E_\dt} [(3/2)(w_{hh\|}(t)+c_0)]^{-3}\d h.
\end{equation}
Combining \eqref{well-defined-1}, \eqref{well-defined-2} and $(w_{hh\|}(t)+c_0)^{-3}\in L^1(\I)$
gives \eqref{well-defined}.

{\bf Step 3. Test with $v=w^\dt(t)\pm\vep \vph$.}

Plugging $v=w^\dt(t)+\vep \vph$ in \eqref{8vi} gives
\begin{equation}
\lg w_t(t),w^\dt(t)-w(t)+\vep\vph\rg+\phi(w^\dt(t)+\vep \vph)-\phi(w(t))\ge 0.
\label{9vi-2}
\end{equation}
Direct computation shows that
\begin{align*}
\phi(w^\dt(t)+\vep \vph)-\phi(w(t)) &= \frac12\int_\I\bigg[ \frac1{(w^\dt_{hh\|}(t)+\vep \vph_{hh}+c_0)^2}
-\frac1{(w_{hh\|}(t)+c_0)^2}\bigg]\d h\\
&=\frac12\int_\I
\frac{(w_{hh\|}(t)+c_0)^2-(w^\dt_{hh\|}(t)+\vep \vph_{hh}+c_0)^2}{(w^\dt_{hh\|}(t)+\vep \vph_{hh}+c_0)^2(w_{hh\|}(t)+c_0)^2}\d h\\
&=\frac12\int_\I
\frac{(w_{hh\|}(t)-w^\dt_{hh\|}(t)-\vep \vph_{hh})
(w_{hh\|}(t)+2c_0+w^\dt_{hh\|}(t)+\vep \vph_{hh})}{(w^\dt_{hh\|}(t)+\vep \vph_{hh}+c_0)^2(w_{hh\|}(t)+c_0)^2}\d h\\
&=\int_\I\frac{w_{hh\|}(t)-w^\dt_{hh\|}(t)-\vep \vph_{hh}}{(w^\dt_{hh\|}(t)+\vep \vph_{hh}+c_0)^2(w_{hh\|}(t)+c_0)}\d h\\
&\qquad -\int_\I\frac{(w_{hh\|}(t)-w^\dt_{hh\|}(t)-\vep \vph_{hh})^2}{2(w^\dt_{hh\|}(t)+\vep \vph_{hh}+c_0)^2(w_{hh\|}(t)+c_0)^2}\d h.
\end{align*}
This, together with \eqref{9vi-2}, gives
\begin{equation}
\lg w_t(t),w^\dt(t)-w(t)+\vep\vph\rg + \int_\I\frac{w_{hh\|}(t)-w^\dt_{hh\|}(t)-\vep \vph_{hh}}{(w^\dt_{hh\|}(t)+\vep \vph_{hh}+c_0)^2(w_{hh\|}(t)+c_0)}\d h\ge 0.\label{9vi-3}
\end{equation}
To take limit in \eqref{9vi-3}, we claim
\begin{align}
\lim_{\vep\to 0}\lg w_t(t),w^\dt(t)-w(t)+\vep\vph\rg/\vep &=\lg w_t(t),\vph\rg, \label{c-1}\\
\lim_{\vep\to 0}\int_\I\frac{w_{hh\|}(t)-w^\dt_{hh\|}(t)}{\vep(w^\dt_{hh\|}(t)+\vep \vph_{hh}+c_0)^2(w_{hh\|}(t)+c_0)}\d h &= 0
\label{c-2},\\
\lim_{\vep\to 0}\int_\I\frac{\vph_{hh}}{(w^\dt_{hh\|}(t)+\vep \vph_{hh}+c_0)^2(w_{hh\|}(t)+c_0)}\d h &=
\int_\I\frac{\vph_{hh}}{(w_{hh\|}(t)+c_0)^3}\d h.
\label{c-3}
\end{align}
{\em Proof of \eqref{c-1}.} Since $\lim_{\vep\to 0}\lg w_t(t),\vep\vph\rg/\vep =\lg w_t(t),\vph\rg$, thus it suffices to prove
\begin{equation*}
\lim_{\vep\to 0}\lg w_t(t),w^\dt(t)-w(t)\rg/\vep =0.
\end{equation*}
From the construction \eqref{trun01} we know $w^\dt_{hh\|}(t) = w_{hh\|}(t)+\dt\mathbf{1}_{E_\dt}$, so direct computation gives
\begin{align*}
\lim_{\vep\to 0}|\lg w_t(t),w^\dt(t)-w(t)\rg/\vep | &\le\lim_{\vep\to 0}\| w_t(t)\|_{U} \|w^\dt(t)-w(t)\|_{U}/\vep \\
&\le\lim_{\vep\to 0}\| w_t(t)\|_{U} \|w^\dt_{hh}(t)-w_{hh}(t)\|_{U}/\vep \\
&\le\lim_{\vep\to 0}\| w_t(t)\|_{U}  \dt|E_\dt|^{1/2}/\vep =0,
\end{align*}
where we used \eqref{tm911_1} and the relation \eqref{v} in the last equality.
Therefore \eqref{c-1} is proven.

\medskip

{\em Proof of \eqref{c-2}.} In view of \eqref{v}, recall that $w^\dt_{hh\|}(t) = w_{hh\|}(t)+\dt\mathbf{1}_{E_\dt}$,
and the relation $\dt/\vep=2\|\vph\|_{W^{2,\8}(\I)}+1$.  Hence
\begin{align*}
\int_\I\frac{w_{hh\|}(t)-w^\dt_{hh\|}(t)}{\vep(w^\dt_{hh\|}(t)+\vep \vph_{hh}+c_0)^2(w_{hh\|}(t)+c_0)}\d h
=\int_{\I}\frac{-(2\|\vph\|_{W^{2,\8}(\I)}+1)\mathbf{1}_{E_\dt}}{(w^\dt_{hh\|}(t)+\vep \vph_{hh}+c_0)^2(w_{hh\|}(t)+c_0)}\d h.
\end{align*}
By \eqref{low estimate} we also have
\begin{equation*}
\int_{\I}\frac{(2\|\vph\|_{W^{2,\8}(\I)}+1)\mathbf{1}_{E_\dt}}{(w^\dt_{hh\|}(t)+\vep \vph_{hh}+c_0)^2(w_{hh\|}(t)+c_0)}\d h
\le\int_{{E_\dt}}\frac{2\|\vph\|_{W^{2,\8}(\I)}+1}{(3/2)^2(w_{hh\|}(t)+c_0)^3}\d h\overset{\vep\to0}\to0,
\end{equation*}
where we have used $(w_{hh\|}(t)+c_0)^{-3}\in L^1(\I)$ by \eqref{L1}. Thus \eqref{c-2} is proven.

\medskip

{\em Proof of \eqref{c-3}.} From \eqref{upper estimate} and \eqref{low estimate}, we know
\begin{align*}
\frac{\vph_{hh}}{(w^\dt_{hh\|}(t)+\vep \vph_{hh}+c_0)^2(w_{hh\|}(t)+c_0)} &\to
\frac{\vph_{hh}}{(w_{hh\|}(t)+c_0)^3}\qquad \text{a.e. on } \I,\\
\frac{|\vph_{hh}|}{(w^\dt_{hh\|}(t)+\vep \vph_{hh}+c_0)^2(w_{hh\|}(t)+c_0)}
&\overset{\eqref{low estimate}}\le\frac{|\vph_{hh}|}{(3/2)^2(w_{hh\|}(t)+c_0)^3}\in L^1(\I) \qquad \text{on } E_\dt,\\
\frac{|\vph_{hh}|}{(w^\dt_{hh\|}(t)+\vep \vph_{hh}+c_0)^2(w_{hh\|}(t)+c_0)}
&\overset{\eqref{upper estimate}}\le\frac{|\vph_{hh}|}{(1/2)^2(w_{hh\|}(t)+c_0)^3} \in L^1(\I)\qquad \text{on } \I\backslash E_\dt,
\end{align*}
thus by Lebesgue's dominated convergence theorem we infer \eqref{c-3}.

\bigskip

Combining \eqref{c-1}, \eqref{c-2} and \eqref{c-3}, we can divide by $\vep>0$ in \eqref{9vi-3} and take the limit $\vep\to0^+$ to obtain
\begin{align*}
&\lim_{\vep\to0^+}\lg w_t(t),w^\dt(t)-w(t)+\vep\vph\rg/\vep +
\int_\I\frac{w_{hh\|}(t)-w^\dt_{hh\|}(t)-\vep \vph_{hh}}{\vep(w^\dt_{hh\|}(t)+\vep \vph_{hh}+c_0)^2(w_{hh\|}(t)+c_0)}\d h\\
&=\lg w_t(t),\vph\rg -
 \int_\I\frac{\vph_{hh}}{(w_{hh\|}(t)+c_0)^3}\d h\geq 0.
\end{align*}
Repeating the above arguments with $v=w^\dt(t)-\vep \vph$ gives
\begin{equation*}
\lg w_t(t),\vph\rg -
 \int_\I\frac{\vph_{hh}}{(w_{hh\|}(t)+c_0)^3}\d h\le0.
\end{equation*}
Thus we finally 
have
\begin{equation}
 \int_\I \bigg[w_t(t) -\big((w_{hh\|}(t)+c_0)^{-3}\big)_{hh}\bigg]\vph\d h =0\qquad \fal\vph\in C^2(\I),
\end{equation}
which gives
 $w_t(t)- [(w_{hh\|}(t)+c_0)^{-3}]_{hh}=0$ in $C^2(\I)'.$
 From the Radon-Nikodym theorem, we also know
$w_t(t)-[(w_{hh\|}(t)+c_0)^{-3}]_{hh}=0$ for a.e. $(t,x)\in[0,T]\times\I.$

   Finally, we turn to verify \eqref{dissiE}.
    Combining \eqref{main21_3} and \eqref{es1107_2}, we have the dissipation
    {law}
  \begin{equation}\label{tE}
    E(w(t))=\frac{1}{2}\|w_t\|_U^2=\frac{1}{2}\|((\eta +c_0)^{-3})_{hh}\|_U^2\leq \frac{1}{2}\|Bw_0\|_U^2=E(0)
  \end{equation}
{  for $E(w)=\frac{1}{2}\int_\I \big[((\eta +c_0)^{-3})_{hh}\big]^2 \d h$.}
  Hence the dissipation inequality \eqref{dissiE} holds and we complete the proof of Theorem \ref{mainth1}.
\qed\end{proof}

\section{acknowledgements}
We would like to thank the support by the National Science Foundation under Grant No. DMS-1514826 and KI-Net RNMS11-07444. We thank Jianfeng Lu for helpful discussions. Part of this work was carried out when Xin Yang Lu was affiliated with McGill University.

% Thus the solution given in Proposition \ref{ru} also satisfies
% \begin{equation*}
%w_t=-\pd(\phi+\psi)(w).
%\end{equation*}
%Since $w_t\in L^\8(0,T;U)$, the right-hand side $-\pd(\phi+\psi)(w)$ must be nonempty for
%a.e. $t$, hence $\|w(t)\|_V\le C$ for a.e. $t$. But on $\{\|w(t)\|_V\le C\}$
%we have $\pd\psi\equiv 0$, hence $w$ is in reality a solution of
%$w_t=-\pd\phi(w)$.
%

{\small

\thebibliography{99}

\bibitem{BCF} W. K. Burton, N. Cabrera and F. C. Frank, The growth of
  crystals and the equilibrium structure of their surfaces,
  Philosophical Transactions of the Royal Society of London A:
  Mathematical, Physical and Engineering Sciences 243 (1951), no. 866,
  299--358.

 \bibitem{Zang1990} M. Ozdemir and A. Zangwill, Morphological equilibration
 of a corrugated crystalline surface, Physical Review B 42 (1990), no. 8, 5013-5024.
 
 \bibitem{Tang1997} L.-H. Tang, Flattenning of grooves: From step dynamics to continuum theory, Dynamics of crystal surfaces and interfaces (2002), 169-184.
 
 \bibitem{Yip2001} W. E and N. K. Yip, Continuum theory of epitaxial
  crystal growth. I, Journal Statistical Physics 104 (2001), no. 1-2,
  221--253.
  
  \bibitem{Xiang2002} Y. Xiang, Derivation of a continuum model for
  epitaxial growth with elasticity on vicinal surface, SIAM Journal on
  Applied Mathematics 63 (2002), no. 1, 241--258.

\bibitem{Xiang2004} Y. Xiang and W. E, Misfit elastic energy and a
  continuum model for epitaxial growth with elasticity on vicinal
  surfaces, Physical Review B 69 (2004), no. 3, 035409.
  
 \bibitem{Shenoy2002} V. Shenoy and L. Freund, A continuum description
  of the energetics and evolution of stepped surfaces in strained
  nanostructures, Journal of the Mechanics and Physics of Solids 50
  (2002), no. 9, 1817--1841.

\bibitem{Margetis2011} D. Margetis, K. Nakamura, From crystal steps to continuum laws: Behavior near large facets in one dimension, Physica D, 240 (2011), 1100--1110.

\bibitem{Leoni2014} G. Dal Maso, I. Fonseca and G. Leoni, Analytical validation of a continuum model for epitaxial growth with elasticity on vicinal surfaces, Archive for Rational Mechanics and Analysis 212 (2014), no. 3, 1037--1064.

\bibitem{our1} Y. Gao, J.-G. Liu and J. Lu, Continuum limit of a mesoscopic model with elasticity of step motion on vicinal surfaces, Journal of Nonlinear Science 27 (2017), no. 3, 873-926.

\bibitem{Margetis2006} D. Margetis and R. V. Kohn, Continuum
  relaxation of interacting steps on crystal surfaces in $2+1$
  dimensions, Multiscale Modeling \& Simulation 5 (2006), no. 3,
  729--758.

\bibitem{Xiang2009} H. Xu and Y. Xiang, Derivation of a continuum
  model for the long-range elastic interaction on stepped epitaxial
  surfaces in $2+1$ dimensions, SIAM Journal on Applied Mathematics 69
  (2009), no. 5, 1393--1414.

\bibitem{Kohnbook} R. V. Kohn, ``Surface relaxation below the roughening temperature: Some recent progress and open questions," Nonlinear partial differential equations: The abel symposium 2010, H. Holden and H. K. Karlsen (Editors), Springer Berlin Heidelberg, Berlin, Heidelberg, 2012, pp. 207-221.

\bibitem{1957} W.W. Mullins, Theory of Thermal Grooving. Journal of Applied Physics 28 (1957), 333-339.

\bibitem{SSR1}
R. Najafabadi and D. J. Srolovitz, Elastic step interactions on vicinal surfaces of fcc metals, Surface Science 317 (1994), no. 1, 221-234.

\bibitem{giga} Y. Giga, R.V. Kohn, Scale-invariant extinction time estimates for some singular diffusion equations. Hokkaido University Preprint Series in Mathematics (2010), no. 963.

\bibitem{She2011} H. Al Hajj Shehadeh, R. V. Kohn and J. Weare, The
  evolution of a crystal surface: Analysis of a one-dimensional step
  train connecting two facets in the adl regime, Physica D: Nonlinear
  Phenomena 240 (2011), no. 21, 1771--1784.

\bibitem{hangjie} Y. Gao, H. Ji, J.-G. Liu and T. P. Witelski, A vicinal surface model for epitaxial growth with logarithmic entropy, submitted.

 \bibitem{Xu2} J.-G.~Liu and X.~Xu,
 {\it Existence theorems for a multi-dimensional crystal surface model},
 SIAM J. Math. Anal. 48 (2016),  no. 6, 3667-3687.

  \bibitem{our2} Y. Gao, J.-G. Liu and J. Lu, Weak solution of a continuum model for vicinal surface in the attachment-detachment-limited regime, SIAM Journal on Mathematical Analysis 49 (2017), no. 3, 1705-1731.
  
\bibitem{Barbu2010}
V. Barbu, Nonlinear differential equations of monotone types in banach spaces, Springer, New York, 2010.  
  
  \bibitem{evans1992}
{\sc L.C. Evans, R.F. Gariepy,} { Measure Theory and Fine Properties of Functions}, CRC Press, 1992.

\bibitem{Leoni2015} I. Fonseca, G. Leoni and X. Y. Lu, Regularity in time for weak solutions of a continuum model for epitaxial growth with elasticity on vicinal surfaces, Communications in Partial Differential Equations 40 (2015), no. 10, 1942--1957.

\bibitem{Xu1} J.-G. Liu and X. Xu, Analytical validation of a continuum model for the evolution of a crystal surface in multiple space dimensions, SIAM Journal on Mathematical Analysis 49 (2017), no. 3, 2220-2245.

\bibitem{Friedman1990} F. Bernis and A. Friedman, Higher order nonlinear degenerate parabolic equations, Journal of Differential Equations 83 (1990), no. 1, 179-206.

   \bibitem{Gradient} L. Ambrosio, N. Gigli and G. Savar\'e, Gradient flows in metric spaces and in the space of probability measures, Birkh\"auser Verlag, 2005.

}

\end{document}